\documentclass[11pt,reqno]{amsart}

\usepackage[T1]{fontenc}
\usepackage[utf8]{inputenc}
\usepackage[expansion=false,protrusion=true]{microtype}
\usepackage{amsmath,amssymb,mathtools}
\usepackage{booktabs}
\usepackage{enumitem}
\usepackage{graphicx}
\usepackage[margin=2.6cm]{geometry}
\usepackage[colorlinks=true,linkcolor=black,citecolor=black,urlcolor=black]{hyperref}

\numberwithin{equation}{section}

\theoremstyle{plain}
\newtheorem{theorem}{Theorem}[section]
\newtheorem{proposition}[theorem]{Proposition}
\newtheorem{lemma}[theorem]{Lemma}
\newtheorem{corollary}[theorem]{Corollary}
\theoremstyle{definition}
\newtheorem{assumption}[theorem]{Assumption}
\theoremstyle{remark}
\newtheorem{remark}[theorem]{Remark}

\newcommand{\btau}{\boldsymbol{\tau}}
\newcommand{\bY}{\mathbf{Y}}
\newcommand{\bU}{\mathbf{U}}
\newcommand{\bX}{\mathbf{X}}
\newcommand{\bW}{\mathbf{W}}
\newcommand{\bS}{\mathbf{S}}
\newcommand{\bone}{\mathbf{1}}
\newcommand{\be}{\mathbf{e}}
\newcommand{\bze}{\mathbf{0}}
\newcommand{\bx}{\mathbf{x}}

\newcommand{\bnu}{\boldsymbol{\nu}}
\newcommand{\tH}{\widetilde H}
\newcommand{\Sig}{\Sigma_{\tH}}

\newcommand{\Hmax}{\tH_{\max}}
\newcommand{\Hmin}{\tH_{\min}}
\newcommand{\R}{\mathbb{R}}
\newcommand{\N}{\mathbb{N}}
\newcommand{\Prob}{\mathbb{P}}
\newcommand{\E}{\mathbb{E}}
\newcommand{\Norm}{\mathcal{N}}
\newcommand{\Hi}{\mathfrak{H}}
\DeclareMathOperator{\Var}{Var}
\DeclareMathOperator{\Cov}{Cov}
\DeclareMathOperator{\tr}{tr}
\DeclareMathOperator{\rank}{rank}
\DeclareMathOperator{\MSE}{MSE}

\begin{document}

\title[Exact inference for multi-sub-fractional Brownian motion]
      {Exact maximum likelihood inference for drifted\\
       multi-sub-fractional Brownian motion\\
       at discrete observation}
\author{Afrah Al-Harby}
\address{Department of Mathematics, College of Science, Imam Abdulrahman Bin Faisal
University, P.\,O.\ Box 1982, Dammam, Saudi Arabia}
\email{2250500238@iau.edu.sa}

\author{Ezzedine Mliki}
\address{Department of Mathematics, College of Science, Imam Abdulrahman Bin Faisal
University, P.\,O.\ Box 1982, Dammam, Saudi Arabia}
\address{Basic and Applied Scientific Research Center, Imam Abdulrahman Bin Faisal
University, P.\,O.\ Box 1982, 31441, Dammam, Saudi Arabia}
\email{ermliki@iau.edu.sa}

\author{Manal Al-Ohali}
\address{Department of Mathematics, College of Science, Imam Abdulrahman Bin Faisal
University, P.\,O.\ Box 1982, Dammam, Saudi Arabia}
\email{malohali@iau.edu.sa}

\begin{abstract}
Sub-fractional Brownian motion is self-similar and long-range dependent but has no
stationary increments, so the increment covariance is not Toeplitz and no spectral
density is available. We show that a complete finite-sample likelihood theory survives
nonetheless. The model is a linear trend observed at $N$ equidistant times through a
superposition of $m$ independent sub-fractional Brownian motions with known Hurst indices
and a common scale. Nondegeneracy follows from realising the process as the even part of
a two-sided fractional Brownian motion, and the maximum likelihood estimators of the
trend and of the scale are explicit. The statistics on which inference rests are pivotal,
their laws depending on the sample size alone, so intervals and tests of exact level are
available at every $N\ge2$, together with complete sufficiency, minimum variance
unbiasedness and attainment of the Cram\'er--Rao bound. An explicit variance bound gives
strong consistency and asymptotic normality, and simulations confirm the exact coverage
and the predicted effect of a misspecified Hurst vector.
\end{abstract}

\subjclass[2020]{Primary 62M09; Secondary 60G22, 60G15, 62F10, 62F25, 60H07}
\keywords{Multi-sub-fractional Brownian motion; non-stationary increments; local
nondeterminism; exact finite-sample inference; pivotal statistic; Cram\'er--Rao bound;
drift estimation}

\maketitle

\section{Introduction}

Sub-fractional Brownian motion was introduced by Bojdecki, Gorostiza and Talarczyk
\cite{BGT04} as the centred Gaussian process $S^{H}=\{S^{H}(t),\,t\ge0\}$ with
$S^{H}(0)=0$ and covariance function
\begin{equation}\label{eq:sfbmcov}
  R_{H}(s,t)=\E\bigl[S^{H}(t)S^{H}(s)\bigr]
  =s^{2H}+t^{2H}-\tfrac12\bigl[(s+t)^{2H}+|s-t|^{2H}\bigr],\qquad s,t\ge0,
\end{equation}
where $H\in(0,1)$. It arose as the limit of occupation time fluctuations of a system of
independent particles and it interpolates, in a precise sense, between Brownian motion
and fractional Brownian motion: like fractional Brownian motion it is self-similar of
index $H$, has H\"older paths of any order less than $H$, is not a semimartingale for
$H\neq\frac12$, and reduces to Brownian motion at $H=\frac12$. Unlike fractional
Brownian motion, however, \emph{its increments are not stationary}. They are
nonstationary in a controlled way: by \eqref{eq:sfbmcov} the increment variance satisfies
\begin{equation}\label{eq:incrbounds}
  \bigl(2-2^{2H-1}\bigr)(t-s)^{2H}\le\E\bigl[(S^{H}(t)-S^{H}(s))^{2}\bigr]
  \le(t-s)^{2H},\qquad H>\tfrac12,
\end{equation}
with the inequalities reversed for $H<\frac12$, and the covariance between increments
decays faster than
those of fractional Brownian motion, so that the long-range dependence is weaker. Path
and distributional properties were developed further by Tudor \cite{Tud07} and the local
time was studied by Mendy \cite{Men10}. Two features account for its use in applications. The first is that the dependence is
weaker than for fractional Brownian motion and decays with the distance from the origin
of time, which suits records in which long memory is present but attenuates as the
process moves away from its starting point; a stationary-increment model would overstate
the persistence of the early part of such a record. The second is that the process
remains Gaussian and self-similar, so models built on it stay tractable. In mathematical
finance it has been used for the pricing of geometric Asian power options \cite{WCT21},
where the sub-fractional kernel reproduces departures from the Black--Scholes price while
the self-similarity is retained. In statistical physics it serves as a Gaussian model of
anomalous diffusion: by \eqref{eq:sfbmcov} the variance is
$(2-2^{2H-1})t^{2H}$, subdiffusive for $H<\frac12$ and superdiffusive for $H>\frac12$,
which is close to the setting in which the process first arose, as the limit of
occupation time fluctuations of a system of independent particles \cite{BGT04}. The same
covariance is a natural candidate for environmental and network-traffic series, where
the intensity of the dependence is itself a function of the time elapsed since the start
of the observation.

A single fractional component, sub-fractional or not, may be insufficient for phenomena
that exhibit several scaling regimes at once. This motivates superpositions of finitely
many independent components with distinct Hurst parameters. For fractional Brownian
motion the resulting class was studied by Maleki Almani and Sottinen \cite{MS23}; the
sub-fractional counterpart, the multi-mixed sub-fractional Brownian motion, was
introduced and analysed by Shokrollahi, Sottinen and Zili \cite{SSZ26}. Related
constructions and their correlation structure appear in Alajmi and Mliki \cite{AM21},
Sghir \cite{Sgh13} and Zili \cite{Zil18}.

Parameter estimation for these processes is a separate matter, and the literature is
asymptotic. Likelihood inference for self-similar Gaussian processes observed on a grid
goes back to Dahlhaus \cite{Dah89}, who established efficiency of the maximum likelihood
estimator in the stationary-increment case. For the drifted fractional Brownian motion
$Y(t)=\mu t+\sigma B^{H}(t)$
observed at discrete times, Hu, Nualart, Xiao and Zhang \cite{HNXZ11} obtained
closed-form maximum likelihood estimators and their limit theory. Mishura, Ralchenko and
Shklyar \cite{MRS17} extended the likelihood approach to Gaussian processes with
stationary increments, Xiao, Zhang and Zhang \cite{XZZ11} treated the mixed fractional
model, and Mishura and Voronov \cite{MV15} the sum of two fractional Brownian motions.
Ralchenko and Yakovliev \cite{RY23,RY24} estimate \emph{two} separate scales and prove
joint asymptotic normality, at the price of the restrictions $H<\frac12$ and
$H_1<H_2<\frac34$ respectively. For the sub-fractional model the reference is Kuang and
Liu \cite{KL15}, who estimate $(\mu,\sigma^{2})$ for a \emph{single} sub-fractional
Brownian motion with drift at discrete observation and obtain strong consistency, a
central limit theorem and Berry--Ess\'een bounds, the last being needed precisely because
the finite-sample law is unavailable.

All of this concerns the increasing-domain regime, in which the mesh is held fixed and
the observation window grows. The complementary infill regime, where the mesh shrinks on
a fixed horizon, has been developed for mixed models in a parallel literature: Mies and
Podolskij \cite{MP23} construct high-frequency estimators for mixed fractional stable
processes, Chong, Delerue and Mies \cite{CDM25} obtain the optimal rates for the joint
estimation of the Hurst index and the scales of a mixed semimartingale together with
matching lower bounds, and Chong, Delerue and Li \cite{CDL25} analyse the effect of
fractional noise in high-frequency data. The two regimes see different parameters. Under
infill sampling it is the Hurst indices that the data determine and the drift that they
do not; here the Hurst vector is taken as known and the drift is what can be recovered.
Proposition \ref{pr:varbound} below makes the second half of that statement precise: the
accuracy of $\widehat\mu$ is governed by the length $t_N=Nh$ of the observation window
and not by the mesh, so refining the grid on a fixed horizon does not help the drift at
all.

So the process class of \cite{SSZ26} exists and its path properties are understood, the
one-component estimation problem has been solved asymptotically, and the
multi-component estimation problem has not been addressed. The present paper addresses
it, and shows that with a single scale shared by the components the inference is not
asymptotic but exact.

It is worth saying at once which part of that is classical. Once the covariance is known
up to a single positive constant, whitening reduces the model to the Gaussian linear
model with one regressor (Remark \ref{re:glm}), and the closed forms, the exact joint
law, the sufficiency, the minimum variance unbiasedness and the exact intervals of
Section \ref{se:mle} are the classical theory of that model transported through
$\Sig^{-1/2}$; the point is already implicit for the fractional model in \cite{HNXZ11}.
What is not classical is that the reduction is available at all here. It requires the
nondegeneracy of $\Sig$, which the loss of stationary increments turns from an
elementary computation into a question about local nondeterminism (Lemma
\ref{le:nondeg}), and what the exactness then delivers is governed entirely by the
scalar $\kappa_N$, whose behaviour is the subject of Section \ref{se:asymp} and is
specific to the sub-fractional kernel.

\subsection*{The observation scheme and what we prove}
Fix a mesh $h>0$ and sample at $t_k=kh$, $k=1,\dots,N$. The data are the vector
\begin{equation}\label{eq:obsvec}
  \bY=\bigl(Y(t_1),\dots,Y(t_N)\bigr)^{\top}
     =\mu\btau+\beta\sum_{r=1}^{m}\bS^{\tH_r},
\end{equation}
generated by the continuous-time model
\begin{equation}\label{eq:model}
  Y(t)=\mu t+\beta\sum_{r=1}^{m}S^{\tH_r}(t),\qquad t\ge0,
\end{equation}
where $S^{\tH_1},\dots,S^{\tH_m}$ are independent sub-fractional Brownian motions with
known Hurst parameters $\tH_1,\dots,\tH_m\in(1/2,1)$, and
\[
  \btau=(t_1,\dots,t_N)^{\top},\qquad
  \bS^{\tH_r}=\bigl(S^{\tH_r}(t_1),\dots,S^{\tH_r}(t_N)\bigr)^{\top}.
\]
Under independence, $\bY$ is Gaussian with mean $\mu\btau$ and covariance matrix
$\beta^{2}\Sig$, where $\Sig=\sum_{r=1}^{m}\Sigma_{\tH_r}$ and
\begin{equation}\label{eq:GammaHr}
  (\Sigma_{\tH_r})_{ij}
  =t_i^{2\tH_r}+t_j^{2\tH_r}
   -\tfrac12\bigl[(t_i+t_j)^{2\tH_r}+|t_i-t_j|^{2\tH_r}\bigr],
  \qquad i,j=1,\dots,N .
\end{equation}
Model \eqref{eq:model} is the common-scale specialisation of
$Y(t)=\mu t+\sum_r\beta_rS^{\tH_r}(t)$. The restriction is what makes the likelihood
explicitly solvable, because the covariance matrix is then a \emph{known} matrix times a
single unknown constant; it is natural whenever the several sources of dependence are
believed to act at a common intensity and to differ only in memory, and it is what the
classical mixed models reduce to after reparameterisation.

The maximum likelihood estimators $\widehat\mu$ and $\widehat\beta^{\,2}$ are available in
closed form (Theorem \ref{th:mle}), and the whole multi-component structure enters the
inference through the single scalar $\kappa_N=\btau^{\top}\Sig^{-1}\btau$. The
nonsingularity of $\Sig$ that this requires is established in Lemma \ref{le:nondeg} by
representing sub-fractional Brownian motion as the even part of a two-sided fractional
Brownian motion, the elementary arguments available in the stationary-increment case
being of no use here. The joint law of $(\widehat\mu,\widehat\beta^{\,2})$ is then exact
at every sample size: the two estimators are independent, $\widehat\mu$ is Gaussian and
$N\widehat\beta^{\,2}/\beta^{2}$ is chi-square with $N-1$ degrees of freedom
(Theorem \ref{th:exactlaw}). Exact confidence intervals and exact tests follow
(Corollaries \ref{co:ci} and \ref{co:tests}), as do complete sufficiency, uniform minimum
variance unbiasedness and attainment of the Cram\'er--Rao bound (Theorem \ref{th:umvu},
Proposition \ref{pr:fisher}).

Beyond the exact theory we establish an explicit non-asymptotic bound on the variance of
the drift estimator (Proposition \ref{pr:varbound}), showing that the accuracy of drift
estimation is governed by the length $t_N=Nh$ of the observation window and by
$\Hmax=\max_r\tH_r$, with the same constant as in the fractional case. Both estimators
are strongly consistent (Theorems \ref{th:scmu} and \ref{th:scbeta}) and asymptotically
normal (Theorem \ref{th:an}), and the sequence $\{\widehat\mu^{(N)}\}$ is, in law, a
Brownian motion evaluated along the decreasing sequence of its own variances
(Theorem \ref{th:indincr} and Corollary \ref{co:tcbm}), which yields a second,
moment-free proof of strong consistency. A Monte Carlo study confirms the exact
distributional results and the exact coverage of the confidence intervals, and measures
the cost of a misspecified Hurst vector (Section \ref{se:sim}).

Asymptotic statements always refer to a fixed mesh and a growing window: $h>0$ stays put
while $N\to\infty$, so that $t_N=Nh\to\infty$. Nothing below is an infill statement.

\section{The model and its likelihood}

\subsection{Standing assumptions}

\begin{assumption}\label{as:main}
We observe \eqref{eq:model} at the $N\ge2$ equidistant times $t_k=kh$. The components
$S^{\tH_1},\dots,S^{\tH_m}$ are independent sub-fractional Brownian motions, their number
$m\in\N$, their Hurst parameters $\tH_r\in(0,1)$ and the mesh $h>0$ being fixed and
known; the drift $\mu\in\R$ and the scale $\beta>0$ are the parameters to be
estimated. The restriction $\tH_r>\frac12$, natural for the long-memory reading of the
model, is not needed anywhere in Sections \ref{se:mle} and \ref{se:asymp} except in the
superadditivity step of Proposition \ref{pr:varbound}; for $\tH_r<\frac12$ the cruder
elementwise bound $(\Sig)_{ij}\le t_i^{2\tH_r}+t_j^{2\tH_r}$ gives the same rate with the
constant $9$ in place of $9/2$.
\end{assumption}

\begin{remark}\label{re:ident}
Only $\Sig$ enters the likelihood, so only $\Sig$ is identified, and the
parameterisation by $(m,\tH)$ is not: $m$ components sharing a common $\tH$ and scale
$\beta$ give the same law as one component with scale $\beta\sqrt m$. Nothing in the
inference is affected, since every statement below is about $\Sig$ and $\kappa_N$; the
$\tH_r$ may therefore be assumed distinct without loss of generality, and we do so
whenever it is convenient.
\end{remark}

We write
\[
  \Hmax=\max_{1\le r\le m}\tH_r,\qquad \Hmin=\min_{1\le r\le m}\tH_r,\qquad
  S^{\tH}=\sum_{r=1}^{m}S^{\tH_r},\qquad
  \bU=\bY-\mu\btau=\beta\,\bS^{\tH} .
\]
The letter $C$ denotes a finite positive constant that may change from line to line and
depends only on $h$, $m$, $\tH$, $\beta$ and, where indicated, on a moment order $q$.

\subsection{The covariance structure}

\begin{lemma}\label{le:law}
Under Assumption \ref{as:main}, $\bY\sim\Norm(\mu\btau,\beta^{2}\Sig)$ with
$\Sig=\sum_{r=1}^{m}\Sigma_{\tH_r}$ and $\Sigma_{\tH_r}$ given by \eqref{eq:GammaHr}.
\end{lemma}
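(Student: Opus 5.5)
The plan is to argue directly from the definition of the components as centred Gaussian processes, together with their mutual independence. No nondegeneracy is claimed here: the statement is about the law of $\bY$ as a possibly degenerate Gaussian vector. Invertibility of $\Sig$ is a separate matter, deferred to Lemma \ref{le:nondeg}.

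First, for each $r$, the vector $\bS^{\tH_r}$ is a finite-dimensional marginal of the centred Gaussian process $S^{\tH_r}$. It is therefore a centred Gaussian vector in $\R^N$, and by \eqref{eq:sfbmcov} evaluated at $s=t_i$, $t=t_j$, its covariance matrix is exactly $\Sigma_{\tH_r}$ as displayed in \eqref{eq:GammaHr}. Next, since the processes $S^{\tH_1},\dots,S^{\tH_m}$ are independent, so are the vectors $\bS^{\tH_1},\dots,\bS^{\tH_m}$. Consequently the stacked vector $(\bS^{\tH_1},\dots,\bS^{\tH_m})\in\R^{mN}$ is jointly Gaussian, with block-diagonal covariance $\operatorname{diag}(\Sigma_{\tH_1},\dots,\Sigma_{\tH_m})$.

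The cleanest way to confirm this is through characteristic functions. The joint characteristic function factorises by independence, and each factor is the Gaussian one $\exp(-\tfrac12 u_r^{\top}\Sigma_{\tH_r}u_r)$. Their product is the characteristic function of the stated block-diagonal Gaussian.

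Finally, $\bY=\mu\btau+\beta\,(I_N,\dots,I_N)\,(\bS^{\tH_1},\dots,\bS^{\tH_m})^{\top}$ is an affine image of this Gaussian vector, and affine images of Gaussian vectors are Gaussian. Its mean is $\mu\btau$, since each component is centred. Its covariance is
\[
\beta^2\,(I_N,\dots,I_N)\,\operatorname{diag}(\Sigma_{\tH_r})\,(I_N,\dots,I_N)^{\top}=\beta^2\sum_r\Sigma_{\tH_r}=\beta^2\Sig .
\]
Equivalently, the characteristic function of $\bY$ at $u$ is
\[
e^{i\mu u^{\top}\btau}\prod_r\exp\bigl(-\tfrac12\beta^2u^{\top}\Sigma_{\tH_r}u\bigr).
\]

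There is no real obstacle. The only point needing care is to phrase the Gaussian law via characteristic functions, or via the affine-image argument, rather than via densities, since positive definiteness of $\Sig$ has not yet been established at this stage.
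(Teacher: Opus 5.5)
Your proof is correct and is essentially the paper's argument: each $\bS^{\tH_r}$ is a centred Gaussian vector with covariance $\Sigma_{\tH_r}$, and independence makes $\sum_r\bS^{\tH_r}$ centred Gaussian with covariance $\Sig$. Then $\bY$ is the affine image $\mu\btau+\beta\sum_r\bS^{\tH_r}$; your characteristic-function and block-diagonal detail, and your care to avoid densities before Lemma \ref{le:nondeg}, just spell out what the paper states in one line.
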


\begin{proof}
Each $\bS^{\tH_r}$ is a centred Gaussian vector with covariance $\Sigma_{\tH_r}$ by
\eqref{eq:sfbmcov}. The components are independent, so $\bS^{\tH}=\sum_r\bS^{\tH_r}$ is
centred Gaussian with covariance $\sum_r\Sigma_{\tH_r}=\Sig$, and $\bY$ is the affine
image $\mu\btau+\beta\bS^{\tH}$.
\end{proof}

\begin{remark}\label{re:nostat}
Two features distinguish this model from its fractional counterpart and they govern the
rest of the paper.

First, $S^{\tH}$ does \emph{not} have stationary increments. By \eqref{eq:sfbmcov}, for
$0\le s\le t$,
\[
  \E\bigl[(S^{H}(t)-S^{H}(s))^{2}\bigr]
  =-2^{2H-1}(t^{2H}+s^{2H})+(t+s)^{2H}+(t-s)^{2H},
\]
which depends on $s$ and $t$ separately and not only on $t-s$. Consequently the increment
vector with coordinates $S^{\tH}(t_k)-S^{\tH}(t_{k-1})$ has a covariance matrix that is
\emph{not} Toeplitz, there is no reduction of $\kappa_N=\btau^{\top}\Sig^{-1}\btau$ to a quadratic
form in the inverse of a Toeplitz matrix, and there is no spectral density to which the
classical theory of the sample mean of a stationary sequence could be applied. Every
statement below about $\kappa_N$ is therefore obtained from the covariance matrix itself.

Second, $S^{\tH}$ is not self-similar when the $\tH_r$ are not all equal, since
$\Var S^{\tH}(t)=\sum_r(2-2^{2\tH_r-1})t^{2\tH_r}$ is not a single power of $t$. It is
this superposition of scaling exponents that gives the model its multi-scale character.
\end{remark}

\subsection{Nondegeneracy}

The likelihood requires $\Sig$ to be invertible and the canonical reduction of
Section \ref{se:mle} requires $\Sig^{-1/2}$. In the fractional case nonsingularity is
classical. Here we use the following representation, which is immediate from
\eqref{eq:sfbmcov} and appears in \cite{BGT04}: if $\{B^{H}(u),\,u\in\R\}$ is a two-sided
fractional Brownian motion with Hurst parameter $H$, then
\begin{equation}\label{eq:evenpart}
  \Bigl\{\tfrac{1}{\sqrt2}\bigl(B^{H}(t)+B^{H}(-t)\bigr),\,t\ge0\Bigr\}
  \overset{d}{=}\bigl\{S^{H}(t),\,t\ge0\bigr\} .
\end{equation}
Sub-fractional Brownian motion is, in law, the even part of a two-sided fractional
Brownian motion.

\begin{lemma}\label{le:nondeg}
Under Assumption \ref{as:main} the matrix $\Sig$ is symmetric positive definite. In
particular $\Sig^{-1}$ and $\Sig^{-1/2}$ exist, the latter being the inverse of the unique
symmetric positive definite square root of $\Sig$.
\end{lemma}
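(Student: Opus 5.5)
Symmetry is immediate from \eqref{eq:GammaHr}. Since $\Sig=\sum_r\Sigma_{\tH_r}$ is a sum of covariance matrices, each positive semidefinite, positive definiteness of $\Sig$ will follow once a single summand, say $\Sigma_{\tH_1}$, is shown to be positive definite. So we fix one $H\in(0,1)$ and show that $\Sigma_H$ is nonsingular. Equivalently, for every nonzero $\bx\in\R^N$,
\[
\bx^{\top}\Sigma_H\bx=\Var\Bigl(\sum_{k=1}^N x_kS^H(t_k)\Bigr)>0 .
\]
The statement about $\Sig^{-1/2}$ then follows from the spectral theorem.

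To prove the variance is positive we use the representation \eqref{eq:evenpart}. It gives
\[
\sum_k x_kS^H(t_k)\overset{d}{=}\tfrac1{\sqrt2}\sum_{k=1}^N x_k\bigl(B^H(t_k)+B^H(-t_k)\bigr).
\]
The right-hand side is a linear combination of the values of a two-sided fractional Brownian motion at the $2N$ distinct nonzero points $\pm t_1,\dots,\pm t_N$. The coefficient vector is $(x,x)/\sqrt2$, which is nonzero. It therefore suffices to know that the covariance matrix of $B^H$ at any finite set of distinct points of $\R\setminus\{0\}$ is positive definite. Then the variance of any nontrivial linear combination is strictly positive.

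This is the main step, and the one I expect to be the obstacle. I will prove it through the spectral (harmonizable) representation of fBm:
\[
B^H(u)=c_H\int_{\R}\frac{e^{iu\xi}-1}{|\xi|^{H+1/2}}\,W(d\xi),
\]
with $W$ a complex Gaussian white noise with Hermitian symmetry. It gives
\[
\Var\Bigl(\sum_j a_jB^H(u_j)\Bigr)=c_H^2\int_{\R}\Bigl|\sum_j a_j(e^{iu_j\xi}-1)\Bigr|^2|\xi|^{-2H-1}\,d\xi .
\]
If this integral vanishes, the continuous function $g(\xi)=\sum_j a_je^{iu_j\xi}-\sum_j a_j$ is zero for almost every $\xi$, hence identically. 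The exponentials $e^{iu\xi}$ at the distinct frequencies $0,u_1,\dots,u_{2N}$ (all $u_j\neq0$) are linearly independent functions of $\xi$, so every $a_j=0$.

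As an alternative to the spectral route, one can appeal to strong local nondeterminism of fBm. This gives $\Var(B^H(u_j)\mid B^H(u_l),\,l\ne j)\ge c\min_{l\ne j}|u_j-u_l|^{2H}>0$, including $u_l=0$, and hence nonsingularity. The spectral argument is self-contained, though, so I will use it and cite LND only as a remark.

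Finally, since $t_k=kh>0$ are distinct, the points $\pm t_k$ are $2N$ distinct nonzero reals. Hence $\bx^{\top}\Sigma_H\bx>0$ for $\bx\ne\bze$, and so $\Sig$ is positive definite. The symmetric positive definite square root $\Sig^{1/2}=P\,\mathrm{diag}(\lambda_i^{1/2})P^{\top}$ exists uniquely and is invertible, which gives $\Sig^{-1/2}$.
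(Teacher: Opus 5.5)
Your proof is correct, and its skeleton is the paper's. You reduce to the single summand $\Sigma_{\tH_1}$ using positive semidefiniteness of the others. You pass through the even-part representation \eqref{eq:evenpart} to a two-sided fractional Brownian motion at the $2N$ distinct nonzero points $\pm t_k$. You conclude from nonsingularity of its covariance there.

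The difference lies entirely in that last step. The paper simply invokes strong local nondeterminism of fractional Brownian motion \cite{Pit78}. You instead prove the nonsingularity from the harmonizable representation, reducing it to linear independence of $e^{iu\xi}$ over the distinct frequencies $0,u_1,\dots,u_{2N}$.

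Your argument is self-contained and uses exactly the qualitative fact that is needed. It also shows transparently why the points must be nonzero: a point $u_j=0$ makes $e^{iu_j\xi}-1\equiv0$, so its coefficient is undetermined, consistent with $B^{H}(0)=0$.

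Strong local nondeterminism is a stronger, quantitative property. It is more than invertibility requires. It is, however, what would give lower bounds on conditional variances or on the smallest eigenvalue of $\Sig$ (cf.\ Remark \ref{re:slnd}), which your spectral argument does not provide.

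Conversely, your route should survive where strong local nondeterminism is unavailable. One example is the $n$th-order setting raised in the Discussion: there the harmonizable kernel differs from $e^{iu\xi}$ only by a polynomial in $\xi$, and exponentials with nonzero frequencies are linearly independent of polynomials.

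Comparing variances via equality in law, as you do, rather than arguing from $\sum_i x_iS(t_i)=0$ a.s., as the paper does, is an inessential difference.
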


\begin{proof}
Being a covariance matrix, $\Sig$ is symmetric and positive semidefinite, so only strict
definiteness has to be proved. Since the components are independent,
$\Sig=\sum_{r}\Sigma_{\tH_r}$ with every summand positive semidefinite, so
$\bx^{\top}\Sig\bx\ge\bx^{\top}\Sigma_{\tH_1}\bx$ for every $\bx\in\R^{N}$ and it suffices
to treat a single component. Write $H=\tH_1$ and $S=S^{H}$.

Let $\bx=(x_1,\dots,x_N)^{\top}\neq\bze$ and suppose $\bx^{\top}\Sigma_{H}\bx=0$. Since
$S$ is centred,
\[
  \bx^{\top}\Sigma_{H}\bx
  =\Var\Bigl(\sum_{i=1}^{N}x_iS(t_i)\Bigr)
  =\E\Bigl[\Bigl(\sum_{i=1}^{N}x_iS(t_i)\Bigr)^{2}\Bigr],
\]
so that $\sum_{i}x_iS(t_i)=0$ almost surely. By \eqref{eq:evenpart} this is equivalent to
\[
  \sum_{i=1}^{N}x_i\bigl(B^{H}(t_i)+B^{H}(-t_i)\bigr)=0\qquad\text{a.s.},
\]
a linear combination of the two-sided fractional Brownian motion $B^{H}$ at the $2N$
points $\pm t_1,\dots,\pm t_N$, which are pairwise distinct and nonzero because
$0<t_1<\dots<t_N$. Fractional Brownian motion is strongly locally nondeterministic
\cite[Sec.~3]{Pit78}, hence its covariance matrix at finitely many distinct nonzero
points is nonsingular, and therefore all the coefficients vanish: $x_i=0$ for every $i$,
contradicting $\bx\neq\bze$.

Hence $\bx^{\top}\Sigma_{H}\bx>0$ for every $\bx\neq\bze$, so $\Sigma_{\tH_1}$, and with
it $\Sig$, is positive definite. A symmetric positive definite matrix has a unique
symmetric positive definite square root and both it and the matrix itself are invertible.
\end{proof}

\begin{remark}\label{re:slnd}
An alternative proof replaces \eqref{eq:evenpart} by the strong local nondeterminism of
$S^{H}$ itself on compact intervals of $[0,\infty)$, which gives
$\Var\bigl(S(t_k)\mid S(t_1),\dots,S(t_{k-1})\bigr)\ge c\,h^{2H}>0$ and the same
contradiction. We prefer \eqref{eq:evenpart} because it reduces the question to the
classical fractional case.
\end{remark}

\subsection{The likelihood function}

\begin{theorem}\label{th:loglik}
Under Assumption \ref{as:main} the log-likelihood of $(\mu,\beta^{2})$ given $\bY$ is
\begin{equation}\label{eq:loglik}
  \ell(\mu,\beta^{2};\bY)
  =-\frac{N}{2}\log(2\pi)-\frac{N}{2}\log\beta^{2}-\frac12\log|\Sig|
   -\frac{1}{2\beta^{2}}(\bY-\mu\btau)^{\top}\Sig^{-1}(\bY-\mu\btau).
\end{equation}
\end{theorem}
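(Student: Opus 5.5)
The log-likelihood is the logarithm of a nondegenerate multivariate normal density, so the plan has three short steps. First, Lemma \ref{le:law} gives $\bY\sim\Norm(\mu\btau,\beta^{2}\Sig)$. Second, Lemma \ref{le:nondeg} shows that $\Sig$ is symmetric positive definite. Since $\beta>0$, the covariance $\beta^{2}\Sig$ is then also positive definite. Hence the law of $\bY$ is absolutely continuous with respect to Lebesgue measure on $\R^{N}$, with the standard Gaussian density
\[
  f(\bY;\mu,\beta^{2})=(2\pi)^{-N/2}\,\bigl|\beta^{2}\Sig\bigr|^{-1/2}
  \exp\Bigl\{-\tfrac12(\bY-\mu\btau)^{\top}(\beta^{2}\Sig)^{-1}(\bY-\mu\btau)\Bigr\}.
\]
This is the point where nondegeneracy is genuinely used: without it there would be no Lebesgue density, and the likelihood would have to be defined on a lower-dimensional support.

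Third, I simplify the two $\beta$-dependent factors. Multilinearity of the determinant over the $N$ rows gives $|\beta^{2}\Sig|=(\beta^{2})^{N}|\Sig|$. Inverting a scalar multiple gives $(\beta^{2}\Sig)^{-1}=\beta^{-2}\Sig^{-1}$. Taking logarithms then yields the four terms of \eqref{eq:loglik} directly:
\begin{itemize}
  \item $-\tfrac N2\log(2\pi)$ from the normalising constant;
  \item $-\tfrac N2\log\beta^{2}-\tfrac12\log|\Sig|$ from the determinant;
  \item the quadratic form with prefactor $-1/(2\beta^{2})$ from the exponent.
\end{itemize}
Here $|\Sig|>0$ by positive definiteness, so $\log|\Sig|$ is well defined. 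It is also a known constant, because $h$, $N$ and the $\tH_r$ are known, so it does not depend on the parameters.

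There is no real obstacle here. The only substantive input is the positive definiteness from Lemma \ref{le:nondeg}, which the statement allows me to assume. I would also note that the likelihood is regarded as a function of $(\mu,\beta^{2})\in\R\times(0,\infty)$, with $\Sig$ fixed by Assumption \ref{as:main}.
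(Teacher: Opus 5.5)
Your proposal is correct and matches the paper's proof: write the Gaussian density of $\bY\sim\Norm(\mu\btau,\beta^{2}\Sig)$ from Lemma \ref{le:law}, note that it is well defined by the positive definiteness in Lemma \ref{le:nondeg}, and take logarithms. Your explicit simplifications $|\beta^{2}\Sig|=(\beta^{2})^{N}|\Sig|$ and $(\beta^{2}\Sig)^{-1}=\beta^{-2}\Sig^{-1}$ spell out a step the paper leaves implicit, since it writes the density directly with $(2\pi\beta^{2})^{-N/2}|\Sig|^{-1/2}$.
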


\begin{proof}
By Lemma \ref{le:law} the density of $\bY$ is
$(2\pi\beta^{2})^{-N/2}|\Sig|^{-1/2}
\exp\{-\tfrac{1}{2\beta^{2}}(\bY-\mu\btau)^{\top}\Sig^{-1}(\bY-\mu\btau)\}$, which is
well defined by Lemma \ref{le:nondeg}; take logarithms.
\end{proof}

\section{The estimators and their exact law}\label{se:mle}

Throughout this section we write
\begin{equation}\label{eq:dN}
  \kappa_N=\btau^{\top}\Sig^{-1}\btau>0 .
\end{equation}

\subsection{Closed forms}

\begin{theorem}\label{th:mle}
Under Assumption \ref{as:main} the log-likelihood \eqref{eq:loglik} has the unique
maximiser
\begin{equation}\label{eq:mle}
  \widehat\mu=\frac{\btau^{\top}\Sig^{-1}\bY}{\btau^{\top}\Sig^{-1}\btau},
  \qquad
  \widehat\beta^{\,2}
  =\frac1N\left[\bY^{\top}\Sig^{-1}\bY
     -\frac{\bigl(\btau^{\top}\Sig^{-1}\bY\bigr)^{2}}{\btau^{\top}\Sig^{-1}\btau}\right].
\end{equation}
\end{theorem}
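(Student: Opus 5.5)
The plan is to profile out $\mu$ first and then maximise in $\beta^{2}$. By Lemma \ref{le:nondeg}, $\Sig^{-1}$ exists and is symmetric positive definite, so $\kappa_N>0$ and the expressions in \eqref{eq:mle} are well defined. Fix $\beta^{2}>0$. In \eqref{eq:loglik} only the quadratic form
\[
  Q(\mu)=(\bY-\mu\btau)^{\top}\Sig^{-1}(\bY-\mu\btau)
  =\bY^{\top}\Sig^{-1}\bY-2\mu\,\btau^{\top}\Sig^{-1}\bY+\mu^{2}\kappa_N
\]
depends on $\mu$. It is a strictly convex quadratic in $\mu$ with leading coefficient $\kappa_N>0$. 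Completing the square gives
\[
  Q(\mu)=\kappa_N(\mu-\widehat\mu)^{2}+\Bigl[\bY^{\top}\Sig^{-1}\bY-\frac{(\btau^{\top}\Sig^{-1}\bY)^{2}}{\kappa_N}\Bigr]
  =\kappa_N(\mu-\widehat\mu)^{2}+N\widehat\beta^{\,2}.
\]
Hence, for every $\beta^{2}$, the unique maximiser in $\mu$ is $\widehat\mu$, and it does not depend on $\beta^{2}$.

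Next I substitute $\mu=\widehat\mu$. The profile log-likelihood is then
\[
  g(\beta^{2})=\text{const}-\tfrac N2\log\beta^{2}-\frac{N\widehat\beta^{\,2}}{2\beta^{2}}.
\]
Provided $\widehat\beta^{\,2}>0$, differentiating in $v=\beta^{2}$ gives $g'(v)=\frac{N}{2v^{2}}(\widehat\beta^{\,2}-v)$. This is positive for $v<\widehat\beta^{\,2}$ and negative for $v>\widehat\beta^{\,2}$, so $v=\widehat\beta^{\,2}$ is the unique global maximiser on $(0,\infty)$.

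Uniqueness of the joint maximiser then follows. Write $\ell(\mu,v)$ for the log-likelihood with $v=\beta^{2}$. We have $\ell(\mu,v)\le\ell(\widehat\mu,v)\le\ell(\widehat\mu,\widehat\beta^{\,2})$. Equality in the second inequality forces $v=\widehat\beta^{\,2}$. Given that, equality in the first forces $\kappa_N(\mu-\widehat\mu)^{2}=0$, that is, $\mu=\widehat\mu$.

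The main obstacle is the positivity of $\widehat\beta^{\,2}$, since the maximiser only exists when it is positive. Write $\bW=\Sig^{-1/2}\bY$ and $\bx=\Sig^{-1/2}\btau$. Then
\[
  N\widehat\beta^{\,2}=\|\bW\|^{2}-\frac{(\bx^{\top}\bW)^{2}}{\|\bx\|^{2}}
\]
is the squared norm of the projection of $\bW$ onto $\bx^{\perp}$. By Cauchy--Schwarz it is nonnegative, and it vanishes exactly when $\bY$ is collinear with $\btau$. Since $\bY$ has a nondegenerate Gaussian law on $\R^{N}$ (Lemma \ref{le:law} and Lemma \ref{le:nondeg}), and $N\ge2$ makes the line $\R\btau$ a Lebesgue-null set, this event has probability zero.

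So the statement holds almost surely. I would state this explicitly: on the null event $\bY\in\R\btau$, the likelihood is unbounded as $\beta^{2}\downarrow0$ and no maximiser exists. This is exactly where the hypothesis $N\ge2$ enters.
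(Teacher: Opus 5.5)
Your proposal is correct and follows the paper's route: for fixed $\beta^{2}$ maximise the concave quadratic in $\mu$ to get $\widehat\mu$, then maximise the profile log-likelihood in $\beta^{2}$. Your extra observation is a valid refinement that the paper's proof leaves implicit: the maximiser exists only when $\widehat\beta^{\,2}>0$, which holds almost surely because $\bY\in\R\btau$ is a null event for $N\ge2$, and on that null event the likelihood is unbounded.
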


\begin{proof}
For fixed $\beta^{2}$ the map $\mu\mapsto\ell$ is a concave quadratic, and
$\partial_\mu\ell=\beta^{-2}\btau^{\top}\Sig^{-1}(\bY-\mu\btau)$ vanishes exactly at
$\widehat\mu$, which is well defined because $\kappa_N>0$. Substituting and writing
$Q=(\bY-\widehat\mu\btau)^{\top}\Sig^{-1}(\bY-\widehat\mu\btau)$, the profile
log-likelihood is $-\frac{N}{2}\log\beta^{2}-Q/(2\beta^{2})$ plus constants, which is
maximised at $\beta^{2}=Q/N$. Expanding $Q$ gives the second formula in \eqref{eq:mle}.
\end{proof}

\begin{remark}\label{re:gls}
$\widehat\mu$ is the generalized least squares estimator of the slope in the
regression of $\bY$ on $\btau$ with error covariance $\Sig$, and $N\widehat\beta^{\,2}$ is
the corresponding residual sum of squares in the metric $\Sig^{-1}$. Only the scalar $\kappa_N$
and the two inner products $\btau^{\top}\Sig^{-1}\bY$ and $\bY^{\top}\Sig^{-1}\bY$ have to be
computed, all three from a single Cholesky factorisation of $\Sig$.
\end{remark}

\subsection{Whitening}

\begin{lemma}\label{le:canon}
Let $\bX=\beta^{-1}\Sig^{-1/2}\bU=\Sig^{-1/2}\bS^{\tH}$ and
\[
  \be=\frac{\Sig^{-1/2}\btau}{\sqrt{\kappa_N}},\qquad \Pi=I_N-\be\be^{\top} .
\]
Then $\bX\sim\Norm(\bze,I_N)$, $\|\be\|=1$, $\Pi$ is the orthogonal projection onto
$\be^{\perp}$ with $\rank\Pi=N-1$, and
\begin{equation}\label{eq:canon}
  \widehat\mu-\mu=\frac{\beta}{\sqrt{\kappa_N}}\,\be^{\top}\bX,
  \qquad
  \frac{N\widehat\beta^{\,2}}{\beta^{2}}=\bX^{\top}\Pi\bX .
\end{equation}
\end{lemma}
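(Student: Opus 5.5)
The proof is direct linear algebra built on Lemmas \ref{le:law} and \ref{le:nondeg} and the closed forms of Theorem \ref{th:mle}.

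\emph{Step 1: the law of $\bX$ and the structure of $\Pi$.} By Lemma \ref{le:law}, $\bS^{\tH}\sim\Norm(\bze,\Sig)$. The matrix $\Sig^{-1/2}$ exists and is symmetric by Lemma \ref{le:nondeg}, so $\bX=\Sig^{-1/2}\bS^{\tH}$ is centred Gaussian with covariance $\Sig^{-1/2}\Sig\Sig^{-1/2}=I_N$. Next,
\[
\|\be\|^{2}=\kappa_N^{-1}\,\btau^{\top}\Sig^{-1/2}\Sig^{-1/2}\btau=\kappa_N^{-1}\kappa_N=1 .
\]
Hence $\Pi=I_N-\be\be^{\top}$ is symmetric and idempotent, with $\Pi\be=\bze$ and $\Pi\bx=\bx$ for $\bx\perp\be$. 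So $\Pi$ is the orthogonal projection onto $\be^{\perp}$, and $\rank\Pi=\tr\Pi=N-1$.

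\emph{Step 2: the drift identity.} Substitute $\bY=\mu\btau+\beta\bS^{\tH}$ into \eqref{eq:mle}:
\[
\widehat\mu-\mu=\frac{\beta\,\btau^{\top}\Sig^{-1}\bS^{\tH}}{\kappa_N}
=\frac{\beta\,(\Sig^{-1/2}\btau)^{\top}\bX}{\kappa_N}
=\frac{\beta}{\sqrt{\kappa_N}}\,\be^{\top}\bX .
\]
The middle step uses $\Sig^{-1}=\Sig^{-1/2}\Sig^{-1/2}$ together with the symmetry of $\Sig^{-1/2}$.

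\emph{Step 3: the scale identity.} This is the only step needing care. First note that $N\widehat\beta^{\,2}$ is invariant under replacing $\bY$ by $\bY-c\btau$ for any $c$. In the proof of Theorem \ref{th:mle} it equals $\min_{c}(\bY-c\btau)^{\top}\Sig^{-1}(\bY-c\btau)$. Equivalently, one can check directly that the bracket in \eqref{eq:mle} is unchanged under this shift, because the cross terms cancel.

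Take $c=\mu$, so that $\bY$ is replaced by $\bU=\beta\Sig^{1/2}\bX$. Then
\[
\bU^{\top}\Sig^{-1}\bU=\beta^{2}\,\bX^{\top}\bX,
\qquad
\btau^{\top}\Sig^{-1}\bU=\beta\sqrt{\kappa_N}\,\be^{\top}\bX .
\]
Therefore
\[
N\widehat\beta^{\,2}=\beta^{2}\bigl(\bX^{\top}\bX-(\be^{\top}\bX)^{2}\bigr)=\beta^{2}\,\bX^{\top}\Pi\bX .
\]
Dividing by $\beta^{2}$ gives the second identity in \eqref{eq:canon}.
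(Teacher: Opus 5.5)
Your proof is correct and follows the paper's argument: you use the same covariance computation for $\bX$, the same norm and trace checks for $\be$ and $\Pi$, and the same substitution into the closed forms of Theorem~\ref{th:mle}. The only difference is cosmetic and lies in the scale identity: you invoke shift invariance of the residual quadratic form (taking $c=\mu$) to dispose of the cross terms, whereas the paper substitutes $\bY=\mu\btau+\beta\Sig^{1/2}\bX$ directly and lets those terms cancel without comment.
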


\begin{proof}
$\bS^{\tH}\sim\Norm(\bze,\Sig)$, so $\bX=\Sig^{-1/2}\bS^{\tH}\sim\Norm(\bze,I_N)$, and
$\|\be\|^{2}=\btau^{\top}\Sig^{-1}\btau/\kappa_N=1$. Substituting
$\bY=\mu\btau+\beta\Sig^{1/2}\bX$ into \eqref{eq:mle},
\[
  \widehat\mu-\mu
  =\frac{\beta\,\btau^{\top}\Sig^{-1/2}\bX}{\kappa_N}
  =\frac{\beta}{\sqrt{\kappa_N}}\,\be^{\top}\bX,
\]
and
\[
  \frac{N\widehat\beta^{\,2}}{\beta^{2}}
  =\bX^{\top}\bX-\frac{(\btau^{\top}\Sig^{-1/2}\bX)^{2}}{\kappa_N}
  =\bX^{\top}\bX-(\be^{\top}\bX)^{2}=\bX^{\top}\Pi\bX .
\]
Finally $\Pi^{2}=\Pi=\Pi^{\top}$ and $\tr\Pi=N-1$.
\end{proof}

\begin{remark}\label{re:glm}
Whitening by $\Sig^{-1/2}$ turns the model into a Gaussian linear model with one
regressor. Writing $\bW=\Sig^{-1/2}\bY$, which is observable, we have
\begin{equation}\label{eq:glm}
  \bW\sim\Norm\bigl(\mu\sqrt{\kappa_N}\,\be,\ \beta^{2}I_N\bigr),
\end{equation}
with $\be$ as in Lemma \ref{le:canon}: a known unit regressor $\be$, the coefficient
$\mu\sqrt{\kappa_N}$ and the unknown error variance $\beta^{2}$. The vector $\bX$ of
Lemma \ref{le:canon} is $\beta^{-1}$ times the centred version of $\bW$, hence standard
normal; being a function of $\beta$ it is not itself a statistic. All the exact distribution
theory of Section \ref{se:mle} is that of this classical model, transported back through
$\Sig^{1/2}$. What the model contributes is the covariance $\Sig$ and the scalar $\kappa_N$ it
produces; what is specific to the sub-fractional case is everything that concerns $\kappa_N$
itself, and that is the content of Section \ref{se:asymp}. The same
remark identifies the boundary of the theory: with component-specific scales
$\beta_1,\dots,\beta_m$ the covariance $\sum_r\beta_r^{2}\Sigma_{\tH_r}$ is no longer a
known matrix times an unknown constant, the whitening is no longer available, and
exactness is lost.
\end{remark}

\subsection{The joint law at every sample size}

\begin{theorem}\label{th:exactlaw}
Under Assumption \ref{as:main}, for every $N\ge2$:
\begin{enumerate}[label=\textup{(\roman*)}]
\item $\widehat\mu\sim\Norm\bigl(\mu,\ \beta^{2}/\kappa_N\bigr)$; in particular
      $\widehat\mu$ is unbiased;
\item $N\widehat\beta^{\,2}/\beta^{2}\sim\chi^{2}_{N-1}$;
\item $\widehat\mu$ and $\widehat\beta^{\,2}$ are independent.
\end{enumerate}
\end{theorem}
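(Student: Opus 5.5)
The plan is to read everything off the canonical representation of Lemma \ref{le:canon}, so that the three claims become statements about a standard normal vector $\bX\sim\Norm(\bze,I_N)$, a unit vector $\be$ and the orthogonal projection $\Pi=I_N-\be\be^{\top}$ of rank $N-1$. Lemma \ref{le:canon} already supplies both identities in \eqref{eq:canon}, and it uses the invertibility of $\Sig$ and the existence of $\Sig^{-1/2}$ from Lemma \ref{le:nondeg}. What remains is the classical Cochran-type argument for the Gaussian linear model of Remark \ref{re:glm}.

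For (i), $\be^{\top}\bX$ is a linear functional of a standard normal vector with $\|\be\|=1$, so $\be^{\top}\bX\sim\Norm(0,1)$. By \eqref{eq:canon}, $\widehat\mu=\mu+(\beta/\sqrt{\kappa_N})\,\be^{\top}\bX\sim\Norm(\mu,\beta^{2}/\kappa_N)$, and unbiasedness follows at once.

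For (ii) and (iii) I would complete $\be$ to an orthonormal basis $\be=\mathbf{q}_1,\mathbf{q}_2,\dots,\mathbf{q}_N$ of $\R^{N}$ and let $Q$ be the orthogonal matrix with these rows. Then $\bZ=Q\bX\sim\Norm(\bze,I_N)$ because $QQ^{\top}=I_N$, so the coordinates $Z_1,\dots,Z_N$ are i.i.d.\ standard normal. Moreover $Z_1=\be^{\top}\bX$, and since $\Pi=\sum_{j\ge2}\mathbf{q}_j\mathbf{q}_j^{\top}$ we get $\bX^{\top}\Pi\bX=\sum_{j=2}^{N}Z_j^{2}$. This sum is $\chi^{2}_{N-1}$, which proves (ii) via \eqref{eq:canon}. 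It depends only on $(Z_2,\dots,Z_N)$, which is independent of $Z_1$. Since $\widehat\mu$ is a measurable function of $Z_1$ and $\widehat\beta^{\,2}$ is a measurable function of $(Z_2,\dots,Z_N)$, with $\beta$, $\kappa_N$ and $N$ deterministic, (iii) follows.

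There is no genuine obstacle here. The only substantive input, the positive definiteness of $\Sig$ that makes the whitening legitimate, is already provided by Lemma \ref{le:nondeg}. The one point to handle carefully is that $N\ge2$ guarantees $\rank\Pi=N-1\ge1$, so the chi-square law is nondegenerate.
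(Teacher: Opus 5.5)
Your proof is correct and follows the paper's route: both read (i)--(iii) off the canonical representation of Lemma \ref{le:canon}. Where the paper cites Cochran's theorem for (ii) and gets (iii) from the vanishing covariance $\Pi\be=\bze$ between the jointly Gaussian $\Pi\bX$ and $\be^{\top}\bX$, you make both steps self-contained by rotating to an orthonormal basis whose first vector is $\be$; this is exactly the proof of Cochran's theorem in the present rank-one case.
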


\begin{proof}
By Lemma \ref{le:canon}, $\be^{\top}\bX\sim\Norm(0,1)$ because $\|\be\|=1$, which gives
(i). Since $\Pi$ is an orthogonal projection of rank $N-1$ and $\bX\sim\Norm(\bze,I_N)$,
Cochran's theorem gives $\bX^{\top}\Pi\bX\sim\chi^{2}_{N-1}$, which is (ii). Finally
$\Pi\be=\bze$, so the Gaussian vector $\Pi\bX$ and the Gaussian variable $\be^{\top}\bX$ have
covariance $\Pi\be=\bze$ and are therefore independent; $\widehat\beta^{\,2}$ is a function
of $\Pi\bX$ and $\widehat\mu$ a function of $\be^{\top}\bX$, which is (iii).
\end{proof}

\begin{remark}\label{re:pivot}
Part (ii) is remarkable in that the law of $N\widehat\beta^{\,2}/\beta^{2}$ depends on
$N$ alone. It does not depend on $\mu$, on $\beta$, on the mesh $h$, on the number of
components $m$, or on the Hurst vector $\tH$. The whitening removes the entire
dependence structure from the sampling distribution of the scale estimator. The same is
true of $\sqrt{\kappa_N}(\widehat\mu-\mu)/\beta$, which is standard normal whatever the
model. The whole model enters the sampling distributions only through the single scalar
$\kappa_N$, and only through the drift.
\end{remark}

\begin{corollary}\label{co:moments}
For every $N\ge2$,
\[
  \E\bigl[\widehat\beta^{\,2}\bigr]=\frac{N-1}{N}\beta^{2},\qquad
  \Var\bigl(\widehat\beta^{\,2}\bigr)=\frac{2(N-1)}{N^{2}}\beta^{4},\qquad
  \MSE\bigl(\widehat\beta^{\,2}\bigr)=\frac{2N-1}{N^{2}}\beta^{4},
\]
so that $\widetilde\beta^{\,2}=\frac{N}{N-1}\widehat\beta^{\,2}$ is unbiased with
$\Var(\widetilde\beta^{\,2})=2\beta^{4}/(N-1)$. Moreover
$\E[\widehat\beta]=c_N\beta$ with
$c_N=\sqrt{2/N}\,\Gamma(N/2)/\Gamma((N-1)/2)$, and $c_N\uparrow1$.
\end{corollary}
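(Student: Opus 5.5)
The plan is to read every claim off Theorem \ref{th:exactlaw}(ii). Write $V=N\widehat\beta^{\,2}/\beta^{2}\sim\chi^{2}_{N-1}$, so that $\widehat\beta^{\,2}=\beta^{2}V/N$. The standard chi-square moments $\E V=N-1$ and $\Var V=2(N-1)$ then give, by scaling,
\[
\E\widehat\beta^{\,2}=\frac{N-1}{N}\beta^{2},\qquad \Var\widehat\beta^{\,2}=\frac{2(N-1)}{N^{2}}\beta^{4}.
\]
The mean squared error is the variance plus the squared bias, where the bias is $-\beta^{2}/N$. This gives
\[
\MSE=\frac{2(N-1)+1}{N^{2}}\beta^{4}=\frac{2N-1}{N^{2}}\beta^{4}.
\]
For the rescaled estimator, $\widetilde\beta^{\,2}=\beta^{2}V/(N-1)$. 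Hence its mean is $\beta^{2}$, and its variance is
\[
\beta^{4}\cdot\frac{2(N-1)}{(N-1)^{2}}=\frac{2\beta^{4}}{N-1}.
\]

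For $\E\widehat\beta$, write $\widehat\beta=\beta\sqrt{V/N}$ and compute $\E\sqrt V$ directly from the $\chi^{2}_{k}$ density with $k=N-1$:
\[
\E V^{1/2}=\frac{1}{2^{k/2}\Gamma(k/2)}\int_0^\infty v^{(k+1)/2-1}e^{-v/2}\,dv=\frac{\sqrt2\,\Gamma((k+1)/2)}{\Gamma(k/2)}.
\]
Substituting $k=N-1$ gives $\E\widehat\beta=\beta\sqrt{2/N}\,\Gamma(N/2)/\Gamma((N-1)/2)$, which is the stated $c_N$.

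The only step needing an argument is the monotonicity and limit of $c_N$; I expect it to be the main obstacle. Jensen gives $c_N^{2}\le\E V/N=(N-1)/N<1$, so $c_N<1$. For the limit I will use Wendel's inequality or Stirling's formula, $\Gamma(x+\tfrac12)/\Gamma(x)\sim\sqrt x$. With $x=(N-1)/2$ this yields $c_N\sim\sqrt{2/N}\sqrt{(N-1)/2}\to1$.

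For monotonicity, set $g(x)=\Gamma(x+\tfrac12)/(\sqrt x\,\Gamma(x))$, so that $c_N=g((N-1)/2)\sqrt{(N-1)/N}$; more directly, set
\[
a_N=\frac{\Gamma(N/2)}{\sqrt{N}\,\Gamma((N-1)/2)}.
\]
I will show that $a_{N+1}/a_N>1$. Using $\Gamma(z+1)=z\Gamma(z)$ twice, one gets
\[
\frac{c_{N+2}}{c_N}=\sqrt{\frac{N}{N+2}}\cdot\frac{N/2}{(N-1)/2}=\frac{N}{N-1}\sqrt{\frac{N}{N+2}},
\]
whose square is $N^{3}/\bigl((N-1)^{2}(N+2)\bigr)>1$, since $N^{3}-(N^{3}-3N+2)=3N-2>0$. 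This gives monotonicity along each parity class. For consecutive terms, the ratio $c_{N+1}/c_N$ involves
\[
\frac{\Gamma((N+1)/2)\,\Gamma((N-1)/2)}{\Gamma(N/2)^{2}},
\]
which exceeds $\sqrt{N/(N-1)}$-type bounds by log-convexity of $\Gamma$ (Gautschi/Kershaw inequalities). I would first try the log-convexity route: the sequence $\log c_N^{2}$ has its increments expressed through $\log\Gamma$ second differences, which are positive. If that becomes messy, I will settle for the two-step parity argument together with the limit, which already gives $c_N\uparrow1$ along each parity class.
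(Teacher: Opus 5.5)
Your derivation of $\E\widehat\beta^{\,2}$, $\Var\widehat\beta^{\,2}$, the mean square error, the unbiased rescaling and $\E\widehat\beta=c_N\beta$ is exactly the paper's proof. The paper applies the $\chi^{2}_{N-1}$ moments and $\E\sqrt Q=\sqrt2\,\Gamma(N/2)/\Gamma((N-1)/2)$ to $Q=N\widehat\beta^{\,2}/\beta^{2}$ and stops there. It gives no argument at all for $c_N\uparrow1$. Your Jensen bound $c_N<1$, the Stirling limit and the parity identity $c_{N+2}^{2}/c_N^{2}=N^{3}/((N-1)^{2}(N+2))>1$ are all correct.

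The gap is the consecutive step, which is what monotonicity in $N$ actually requires; it does not go through as you describe it. With $x=(N-1)/2$,
\[
  \frac{c_{N+1}}{c_N}=\sqrt{\frac{N}{N+1}}\;
  \frac{\Gamma(x+1)\,\Gamma(x)}{\Gamma(x+\frac12)^{2}} .
\]
Log-convexity of $\Gamma$ only says the Gamma factor is at least $1$. That yields $c_{N+1}/c_N\ge\sqrt{N/(N+1)}$, a number below $1$, so it decides nothing. In your logarithmic form the increment is $\log c_{N+1}^{2}-\log c_N^{2}=2\Delta_N-\log(1+1/N)$, with $\Delta_N=\log\Gamma(\frac{N+1}{2})-2\log\Gamma(\frac N2)+\log\Gamma(\frac{N-1}{2})$. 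Positivity of $\Delta_N$ is therefore not enough to beat the negative term.

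Put differently, the required inequality is $\Gamma(x+\frac12)^{2}/\Gamma(x)^{2}<x\sqrt{(x+\frac12)/(x+1)}$. Both sides equal $x-\frac14+O(1/x)$, while log-convexity bounds the left side only by $x$. You need an estimate sharp to the constant $\frac14$.

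The parity fallback does not rescue the statement either. Increase along even $N$ and along odd $N$, together with the limit, does not give increase from $N$ to $N+1$.

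The fix is the inequality you already named. Kershaw's bound $\Gamma(x+1)/\Gamma(x+\frac12)>\sqrt{x+\frac14}$ gives $x\,\Gamma(x)^{2}/\Gamma(x+\frac12)^{2}>(x+\frac14)/x$, hence
\[
  \frac{c_{N+1}^{2}}{c_N^{2}}>\frac{(x+\frac12)(x+\frac14)^{2}}{x^{2}(x+1)}
  =1+\frac{10x+1}{32\,x^{2}(x+1)}>1 .
\]
Alternatively, along your log route, the mean value theorem gives $\Delta_N=\frac14\psi'(\xi)$ for some $\xi<\frac{N+1}{2}$, where $\psi'$ is the trigamma function. Combined with $\psi'(z)>\frac1z+\frac1{2z^{2}}$, this yields $2\Delta_N>(N+2)/(N+1)^{2}$, which exceeds $\log(1+1/N)$ for every $N\ge2$.
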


\begin{proof}
If $Q\sim\chi^{2}_{N-1}$ then $\E Q=N-1$, $\Var Q=2(N-1)$ and
$\E\sqrt{Q}=\sqrt2\,\Gamma(N/2)/\Gamma((N-1)/2)$. Apply these to
$Q=N\widehat\beta^{\,2}/\beta^{2}$ and use
$\widehat\beta=\beta\sqrt{Q}/\sqrt N$; the mean square error is the variance plus the
squared bias $\beta^{4}/N^{2}$.
\end{proof}

\subsection{Optimality}

\begin{theorem}\label{th:umvu}
Under Assumption \ref{as:main}, with $\psi=(\mu,\beta^{2})$ ranging over
$\R\times(0,\infty)$:
\begin{enumerate}[label=\textup{(\roman*)}]
\item the family of laws of $\bY$ is a full-rank two-parameter exponential family with
      natural sufficient statistic
      $\bigl(\btau^{\top}\Sig^{-1}\bY,\ \bY^{\top}\Sig^{-1}\bY\bigr)$, which is therefore
      complete and sufficient;
\item $\widehat\mu$ is the uniformly minimum variance unbiased estimator of $\mu$;
\item $\widetilde\beta^{\,2}=\frac{N}{N-1}\widehat\beta^{\,2}$ is the uniformly minimum
      variance unbiased estimator of $\beta^{2}$, and $\widehat\beta/c_N$ is that of
      $\beta$.
\end{enumerate}
\end{theorem}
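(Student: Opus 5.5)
The plan is to write the density from Theorem \ref{th:loglik} in exponential-family form and then apply Lehmann--Scheff\'e, using the exact laws from Theorem \ref{th:exactlaw} and Corollary \ref{co:moments} for the unbiasedness checks.

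For (i), expanding the quadratic form in \eqref{eq:loglik} gives
\[
  \ell=-\frac{1}{2\beta^{2}}\bY^{\top}\Sig^{-1}\bY+\frac{\mu}{\beta^{2}}\btau^{\top}\Sig^{-1}\bY
  -\frac{\mu^{2}\kappa_N}{2\beta^{2}}-\frac N2\log\beta^{2}-\frac12\log|\Sig|-\frac N2\log(2\pi).
\]
The density therefore has the form $h(\bY)\exp\{\eta_1T_1+\eta_2T_2-A(\eta)\}$, where
\[
  T=(T_1,T_2)=\bigl(\btau^{\top}\Sig^{-1}\bY,\ \bY^{\top}\Sig^{-1}\bY\bigr),\qquad
  \eta=(\eta_1,\eta_2)=\bigl(\mu/\beta^{2},\ -1/(2\beta^{2})\bigr).
\]
The map $(\mu,\beta^{2})\mapsto\eta$ is a bijection of $\R\times(0,\infty)$ onto the open half-plane $\R\times(-\infty,0)$. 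So the natural parameter space contains an open set, and the family is of full rank provided $T_1,T_2$ satisfy no affine relation almost surely.

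That is the only point needing an argument, and it is mild. By Lemma \ref{le:nondeg}, $\bY$ has a density positive on all of $\R^{N}$. Suppose $aT_1+bT_2=c$ almost surely. Then the polynomial $a\btau^{\top}\Sig^{-1}\bx+b\bx^{\top}\Sig^{-1}\bx-c$ vanishes on a set of positive Lebesgue measure, hence identically. Since $\Sig^{-1}\neq0$ and $\Sig^{-1}\btau\ne\bze$, this forces $a=b=c=0$. Completeness and sufficiency of $T$ then follow from the standard theorem for full-rank exponential families, with sufficiency also visible directly from the factorisation.

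For (ii) and (iii), I observe that by \eqref{eq:mle} both $\widehat\mu=T_1/\kappa_N$ and $N\widehat\beta^{\,2}=T_2-T_1^{2}/\kappa_N$ are functions of $T$. Theorem \ref{th:exactlaw}(i) gives $\E\widehat\mu=\mu$. Corollary \ref{co:moments} gives $\E\widetilde\beta^{\,2}=\beta^{2}$ and $\E[\widehat\beta/c_N]=\beta$. All three have finite variance, since Gaussian and chi-square variables have all moments. Lehmann--Scheff\'e then says that an unbiased estimator which is a function of a complete sufficient statistic is the unique UMVU estimator. This gives all three claims at once, uniformly over $\psi$.
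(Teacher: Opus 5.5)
Your proposal is correct and follows the paper's route. You write the density in exponential form with natural parameter $\eta=(\mu/\beta^{2},-1/(2\beta^{2}))$ ranging over the open half-plane $\R\times(-\infty,0)$, obtain completeness from full rank, and apply Lehmann--Scheff\'e with unbiasedness taken from Theorem \ref{th:exactlaw} and Corollary \ref{co:moments}. Your explicit check that $T_1,T_2$ satisfy no affine relation, via the positive density and the measure-zero zero set of a nonzero polynomial, is a detail the paper's proof leaves implicit in the words ``of full rank'', so your version is, if anything, slightly more complete.
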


\begin{proof}
Writing the density from Theorem \ref{th:loglik} in exponential form,
\[
  f(\bY;\psi)=\exp\Bigl\{\eta_1\,\btau^{\top}\Sig^{-1}\bY
  +\eta_2\,\bY^{\top}\Sig^{-1}\bY-A(\eta)\Bigr\}g(\bY),
  \qquad \eta_1=\frac{\mu}{\beta^{2}},\quad\eta_2=-\frac{1}{2\beta^{2}},
\]
and as $\psi$ ranges over $\R\times(0,\infty)$ the natural parameter
$(\eta_1,\eta_2)$ ranges over $\R\times(-\infty,0)$, which contains a two-dimensional
open rectangle. The family is therefore of full rank and the natural statistic is
complete and sufficient \cite[Thm.~1.6.22 and Thm.~4.3.1]{LC98}. This gives (i). The
estimators $\widehat\mu$ and $\widehat\beta^{\,2}$ are functions of that statistic,
and by Theorem \ref{th:exactlaw} and Corollary \ref{co:moments} the quantities
$\widehat\mu$, $\widetilde\beta^{\,2}$ and $\widehat\beta/c_N$ are unbiased for
$\mu$, $\beta^{2}$ and $\beta$. The Lehmann--Scheff\'e theorem
\cite[Thm.~2.1.11]{LC98} identifies each as the unique uniformly minimum variance
unbiased estimator of the corresponding parameter.
\end{proof}

\begin{proposition}\label{pr:fisher}
Under Assumption \ref{as:main} the Fisher information matrix of $\psi=(\mu,\beta^{2})$
contained in $\bY$ is
\begin{equation}\label{eq:fisher}
  I_N(\psi)=\begin{pmatrix} \kappa_N/\beta^{2} & 0\\[2pt] 0 & N/(2\beta^{4})\end{pmatrix}.
\end{equation}
Consequently:
\begin{enumerate}[label=\textup{(\roman*)}]
\item $\mu$ and $\beta^{2}$ are orthogonal parameters;
\item $\Var(\widehat\mu)=\beta^{2}/\kappa_N$, so $\widehat\mu$ attains the
      Cram\'er--Rao bound at every sample size $N\ge2$, not merely asymptotically;
\item the Cram\'er--Rao bound for unbiased estimation of $\beta^{2}$ is
      $2\beta^{4}/N$; it is attained by no unbiased estimator, and
      $\widetilde\beta^{\,2}$ exceeds it by the factor $N/(N-1)\to1$.
\end{enumerate}
\end{proposition}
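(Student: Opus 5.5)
We will compute the Fisher information directly from the log-likelihood \eqref{eq:loglik} of Theorem \ref{th:loglik}, using the observed information and taking expectations with respect to the exact law in Lemma \ref{le:law}. Write $v=\beta^{2}$ and $\bU=\bY-\mu\btau$. The score components are
\[
  \partial_\mu\ell=v^{-1}\btau^{\top}\Sig^{-1}\bU,\qquad
  \partial_v\ell=-\frac{N}{2v}+\frac{1}{2v^{2}}\,\bU^{\top}\Sig^{-1}\bU .
\]
Differentiating once more gives the second derivatives
\[
  \partial^{2}_{\mu}\ell=-\kappa_N/v,\qquad
  \partial_\mu\partial_v\ell=-v^{-2}\btau^{\top}\Sig^{-1}\bU,\qquad
  \partial^{2}_{v}\ell=\frac{N}{2v^{2}}-\frac{1}{v^{3}}\bU^{\top}\Sig^{-1}\bU .
\]

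We then take expectations, using $\bU\sim\Norm(\bze,v\Sig)$. The mixed term has mean zero, because $\E\bU=\bze$. For the last term we use $\E[\bU^{\top}\Sig^{-1}\bU]=\tr(\Sig^{-1}v\Sig)=Nv$. This yields $-\E\partial_v^{2}\ell=-N/(2v^{2})+N/v^{2}=N/(2v^{2})$, which establishes \eqref{eq:fisher}. The regularity needed for the identity $I=-\E[\nabla^{2}\ell]$ holds trivially, since we are in a Gaussian exponential family (Theorem \ref{th:umvu}(i)). As a cross-check one can instead compute $\E[(\partial_\mu\ell)^{2}]=v^{-2}\btau^{\top}\Sig^{-1}(v\Sig)\Sig^{-1}\btau=\kappa_N/v$.

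The three consequences then follow. Part (i) is the vanishing of the off-diagonal entry. For (ii), the Cram\'er--Rao bound for $\mu$ is the $(1,1)$ entry of $I_N^{-1}$, which is $\beta^{2}/\kappa_N$ because the matrix is diagonal. This coincides with $\Var(\widehat\mu)$ from Theorem \ref{th:exactlaw}(i), and $\widehat\mu$ is unbiased at every $N\ge2$. For (iii), the bound for $\beta^{2}$ is $2\beta^{4}/N$. By Corollary \ref{co:moments}, $\Var(\widetilde\beta^{\,2})=2\beta^{4}/(N-1)$, which is the bound times $N/(N-1)$.

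The step that needs care is the claim in (iii) that no unbiased estimator attains the bound. We argue by the equality case of Cauchy--Schwarz in the Cram\'er--Rao inequality. An unbiased $T$ attains the bound at every $\psi$ only if $T-\beta^{2}$ is almost surely a linear combination of the scores, with coefficients depending on $\psi$. Since $T$ must be unbiased for $\beta^{2}$ (and hence uncorrelated with $\partial_\mu\ell$ in the attaining direction), the equality case forces
\[
  T-v=a(\psi)\,\partial_v\ell=a(\psi)\Bigl(-\frac{N}{2v}+\frac{\bU^{\top}\Sig^{-1}\bU}{2v^{2}}\Bigr).
\]
Matching variances gives $a=2v^{2}/N$. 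Hence $T=\bU^{\top}\Sig^{-1}\bU/N=(\bY-\mu\btau)^{\top}\Sig^{-1}(\bY-\mu\btau)/N$. This depends on the unknown $\mu$, so it is not a statistic, and therefore no estimator attains the bound. Alternatively, one can invoke uniqueness: $\widetilde\beta^{\,2}$ is the UMVU estimator by Theorem \ref{th:umvu}(iii), so any attaining unbiased estimator would have variance at most that of $\widetilde\beta^{\,2}$. But $\Var(\widetilde\beta^{\,2})=2\beta^{4}/(N-1)>2\beta^{4}/N$, so no unbiased estimator has variance equal to the bound. This second argument is the one we will present, since it is immediate from results already proved.
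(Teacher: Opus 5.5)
Your proposal is correct and is essentially the paper's proof. The paper reads the entries off the standard Gaussian formula $I_{ij}=(\partial_i\bnu)^{\top}\Sigma^{-1}(\partial_j\bnu)+\frac12\tr(\Sigma^{-1}(\partial_i\Sigma)\Sigma^{-1}(\partial_j\Sigma))$ rather than taking expectations of the Hessian of $\ell$, which is the same computation, and it proves non-attainment in (iii) exactly as you do: by Theorem \ref{th:umvu}(iii) every unbiased estimator has variance at least $\Var(\widetilde\beta^{\,2})=2\beta^{4}/(N-1)>2\beta^{4}/N$, and this holds at each fixed $\psi$, whereas your equality-case alternative as written only excludes attainment at every $\psi$ simultaneously.
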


\begin{proof}
For $\bY\sim\Norm(\bnu(\psi),\Sigma(\psi))$ the information matrix has entries
$I_{ij}=(\partial_i\bnu)^{\top}\Sigma^{-1}(\partial_j\bnu)
+\frac12\tr(\Sigma^{-1}(\partial_i\Sigma)\Sigma^{-1}(\partial_j\Sigma))$. Here
$\bnu=\mu\btau$ and $\Sigma=\beta^{2}\Sig$, so $\partial_\mu\bnu=\btau$,
$\partial_{\beta^{2}}\bnu=\bze$, $\partial_\mu\Sigma=0$ and
$\partial_{\beta^{2}}\Sigma=\Sig$, giving
$I_{\mu\mu}=\btau^{\top}(\beta^{2}\Sig)^{-1}\btau=\kappa_N/\beta^{2}$,
$I_{\mu\beta^{2}}=0$ and
$I_{\beta^{2}\beta^{2}}=\frac12\tr((\beta^{-2}I_N)^{2})=N/(2\beta^{4})$. Parts (i) and
(ii) follow from \eqref{eq:fisher} and Theorem \ref{th:exactlaw}(i). For (iii), the bound
is $2\beta^{4}/N$ while $\Var(\widetilde\beta^{\,2})=2\beta^{4}/(N-1)$ by
Corollary \ref{co:moments}; since $\widetilde\beta^{\,2}$ has minimum variance among all
unbiased estimators by Theorem \ref{th:umvu}(iii) and its variance is strictly larger
than the bound, no unbiased estimator attains it.
\end{proof}

\begin{remark}\label{re:msecomp}
In the family $\{c\widehat\beta^{\,2}:c>0\}$, which contains the maximum likelihood
estimator ($c=1$) and the unbiased one ($c=N/(N-1)$), Theorem \ref{th:exactlaw}(ii) gives
$\MSE(c\widehat\beta^{\,2})=\beta^{4}[k^{2}(N-1)(N+1)-2k(N-1)+1]$ with $k=c/N$, minimised
at $c=N/(N+1)$. Hence
\[
  \MSE\Bigl(\tfrac{N}{N+1}\widehat\beta^{\,2}\Bigr)=\frac{2\beta^{4}}{N+1}
  <\MSE\bigl(\widehat\beta^{\,2}\bigr)=\frac{(2N-1)\beta^{4}}{N^{2}}
  <\Var\bigl(\widetilde\beta^{\,2}\bigr)=\frac{2\beta^{4}}{N-1}:
\]
unbiasedness costs mean square error, and the maximum likelihood estimator sits strictly
between the unbiased estimator and the mean-square-optimal multiple.
\end{remark}

\subsection{Intervals and tests of exact level}

\begin{corollary}\label{co:ci}
Let $\gamma\in(0,1)$, let $\chi^{2}_{k,p}$ denote the $p$-quantile of $\chi^{2}_{k}$ and
$t_{k,p}$ that of the Student law with $k$ degrees of freedom. Then, for every $N\ge2$,
\begin{equation}\label{eq:ci-beta}
  \Prob\left(\frac{N\widehat\beta^{\,2}}{\chi^{2}_{N-1,\,1-\gamma/2}}\le\beta^{2}
  \le\frac{N\widehat\beta^{\,2}}{\chi^{2}_{N-1,\,\gamma/2}}\right)=1-\gamma,
\end{equation}
and, with $\widetilde\beta^{\,2}=\frac{N}{N-1}\widehat\beta^{\,2}$,
\begin{equation}\label{eq:ci-theta}
  \sqrt{\kappa_N}\;\frac{\widehat\mu-\mu}{\widetilde\beta}\sim t_{N-1},
  \qquad\text{so that}\qquad
  \Prob\left(\bigl|\widehat\mu-\mu\bigr|
  \le t_{N-1,\,1-\gamma/2}\,\frac{\widetilde\beta}{\sqrt{\kappa_N}}\right)=1-\gamma .
\end{equation}
\end{corollary}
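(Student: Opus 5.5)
Both statements follow from Theorem \ref{th:exactlaw} by pivoting, and the plan is to treat them separately.

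For \eqref{eq:ci-beta}, set $Q=N\widehat\beta^{\,2}/\beta^{2}$. By Theorem \ref{th:exactlaw}(ii), $Q\sim\chi^{2}_{N-1}$. This law is continuous, so
\[
  \Prob\bigl(\chi^{2}_{N-1,\gamma/2}\le Q\le\chi^{2}_{N-1,1-\gamma/2}\bigr)=1-\gamma .
\]
The quantiles are strictly positive, and $\widehat\beta^{\,2}>0$ almost surely, since $\bX^{\top}\Pi\bX$ vanishes only on the null set $\{\Pi\bX=\bze\}$ when $N\ge2$. Taking reciprocals and multiplying by $N\widehat\beta^{\,2}$ turns the event into exactly the one displayed in \eqref{eq:ci-beta}.

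For \eqref{eq:ci-theta}, define
\[
  Z=\sqrt{\kappa_N}(\widehat\mu-\mu)/\beta,\qquad Q=N\widehat\beta^{\,2}/\beta^{2}.
\]
By Theorem \ref{th:exactlaw}(i), $Z\sim\Norm(0,1)$. By (ii), $Q\sim\chi^{2}_{N-1}$, and by (iii) $Z$ and $Q$ are independent. Since $\widetilde\beta^{\,2}=N\widehat\beta^{\,2}/(N-1)=\beta^{2}Q/(N-1)$, we get
\[
  \sqrt{\kappa_N}\,\frac{\widehat\mu-\mu}{\widetilde\beta}=\frac{Z}{\sqrt{Q/(N-1)}} ,
\]
and the unknown $\beta$ cancels. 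The right-hand side is, by definition, Student-$t$ with $N-1$ degrees of freedom.

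The Student law is continuous and symmetric, so $\Prob(|T|\le t_{N-1,1-\gamma/2})=1-\gamma$. Multiplying through by $\widetilde\beta/\sqrt{\kappa_N}>0$ gives the interval statement. Here $\kappa_N$ is a known deterministic positive constant, by Lemma \ref{le:nondeg} and \eqref{eq:dN}.

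There is no real obstacle. The only points to make explicit are the almost sure positivity of $\widehat\beta^{\,2}$, so that the divisions are legitimate, and the fact that independence in Theorem \ref{th:exactlaw}(iii) is exactly what the definition of the $t$ law requires.
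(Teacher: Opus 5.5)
Your proof is correct and follows the paper's route: invert the $\chi^{2}_{N-1}$ pivot of Theorem~\ref{th:exactlaw}(ii) for the scale, then combine parts (i)--(iii) with $\sqrt{Q/(N-1)}=\widetilde\beta/\beta$ to obtain the Student pivot. Your remark that $\widehat\beta^{\,2}>0$ almost surely is a minor refinement that the paper leaves implicit.
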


\begin{proof}
The interval for $\beta^{2}$ inverts Theorem \ref{th:exactlaw}(ii). For the second
statement put $\xi=\sqrt{\kappa_N}(\widehat\mu-\mu)/\beta$ and
$Q=N\widehat\beta^{\,2}/\beta^{2}$. By Theorem \ref{th:exactlaw} these are independent
with $\xi\sim\Norm(0,1)$ and $Q\sim\chi^{2}_{N-1}$, so $\xi/\sqrt{Q/(N-1)}\sim t_{N-1}$;
and $\sqrt{Q/(N-1)}=\widetilde\beta/\beta$.
\end{proof}

\begin{corollary}\label{co:tests}
Fix $\gamma\in(0,1)$ and $N\ge2$.
\begin{enumerate}[label=\textup{(\roman*)}]
\item For $\mathcal H_0:\mu=\mu_0$ against $\mathcal H_1:\mu\neq\mu_0$, the
      test rejecting when
      $|\sqrt{\kappa_N}(\widehat\mu-\mu_0)/\widetilde\beta|>t_{N-1,1-\gamma/2}$ has
      exact size $\gamma$ whatever $\beta>0$, and is uniformly most powerful unbiased.
\item For $\mathcal H_0:\beta^{2}=\beta_0^{2}$ against
      $\mathcal H_1:\beta^{2}\neq\beta_0^{2}$, the test rejecting when
      $N\widehat\beta^{\,2}/\beta_0^{2}\notin
      [\chi^{2}_{N-1,\gamma/2},\chi^{2}_{N-1,1-\gamma/2}]$ has exact size $\gamma$
      whatever $\mu\in\R$.
\end{enumerate}
\end{corollary}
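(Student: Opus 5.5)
Both size statements follow from the exact laws already established; only the optimality claim in (i) needs a separate argument. For (i), under $\mathcal H_0$ the statistic $T=\sqrt{\kappa_N}(\widehat\mu-\mu_0)/\widetilde\beta$ has the $t_{N-1}$ law for every $\beta>0$ by Corollary \ref{co:ci}. Since the Student law is symmetric, $\Prob(|T|>t_{N-1,1-\gamma/2})=\gamma$ exactly. For (ii), under $\mathcal H_0$ Theorem \ref{th:exactlaw}(ii) gives $N\widehat\beta^{\,2}/\beta_0^{2}\sim\chi^{2}_{N-1}$ for every $\mu$. Hence the probability of falling outside $[\chi^{2}_{N-1,\gamma/2},\chi^{2}_{N-1,1-\gamma/2}]$ is $\gamma/2+\gamma/2=\gamma$. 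This is just the test form of the interval \eqref{eq:ci-beta}.

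To prove the uniformly most powerful unbiased property in (i), I would pass to the whitened model of Remark \ref{re:glm}. Complete the unit vector $\be$ to an orthonormal basis of $\R^{N}$ and let $Z_1=\be^{\top}\bW$ and $Z_2,\dots,Z_N$ be the remaining coordinates of $\bW$. These are independent, with $Z_1\sim\Norm(\mu\sqrt{\kappa_N},\beta^{2})$ and $Z_j\sim\Norm(0,\beta^{2})$ for $j\ge2$. This is exactly the canonical one-sample normal problem of testing a mean with unknown variance, after the shift $Z_1\mapsto Z_1-\mu_0\sqrt{\kappa_N}$.

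Equivalently, one can stay with the exponential-family form of Theorem \ref{th:umvu}(i). Its natural parameter is $\eta_1=\mu/\beta^{2}$, and after recentring at $\mu_0$ the hypothesis $\mu=\mu_0$ becomes $\eta_1'=0$, with the nuisance sufficient statistic $\bY^{\top}\Sig^{-1}\bY$ adjusted accordingly. The Lehmann--Romano theory of UMPU tests in multiparameter exponential families then applies (Lehmann--Scheffé and Neyman structure via the completeness in Theorem \ref{th:umvu}(i)). The UMPU two-sided test conditions on the complete sufficient statistic for the nuisance parameter under $\mathcal H_0$. It can be written as a test based on a statistic $V$ that is independent of that statistic under $\mathcal H_0$ (Basu's theorem) and is linear in the tested statistic for fixed nuisance. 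The natural choice is $V=(Z_1-\mu_0\sqrt{\kappa_N})/\sqrt{\sum_{j\ge1}(Z_j-\delta_{j1}\mu_0\sqrt{\kappa_N})^{2}}$.

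The main obstacle is identifying that this $V$-based test is the two-sided $|T|$ test. First, $T$ is a strictly increasing function of $V$. Second, $V$ has a symmetric law under $\mathcal H_0$, which makes the two-sided UMPU cutoffs symmetric. These two facts together give the rejection region $|T|>t_{N-1,1-\gamma/2}$. This is exactly the classical argument for the one-sample $t$-test, so I would cite it from \cite{LC98}, or the companion testing text, once the reduction is in place. The only genuinely new step is checking that the whitening $\bW=\Sig^{-1/2}\bY$ is available, which Lemma \ref{le:nondeg} guarantees.
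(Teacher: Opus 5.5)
Your proposal is correct and follows the paper's route. The sizes come straight from Corollary \ref{co:ci} and Theorem \ref{th:exactlaw}(ii), and the UMPU claim comes from whitening to the one-regressor Gaussian linear model of Remark \ref{re:glm} and invoking the classical one-sample $t$-test; you spell out that reduction (orthogonal completion of $\be$, Neyman structure, Basu, symmetry of $V$) in more detail than the paper's one-line citation. Two small corrections: the UMPU theory should be cited from \cite{LR05}, not \cite{LC98}, and the completeness that Neyman structure needs is that of $(\bY-\mu_0\btau)^{\top}\Sig^{-1}(\bY-\mu_0\btau)$ in the one-parameter family $\mu=\mu_0$, rather than the two-parameter completeness of Theorem \ref{th:umvu}(i) or Lehmann--Scheff\'e.
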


\begin{proof}
Exactness of the sizes is immediate from Corollary \ref{co:ci}. By Remark \ref{re:glm}
the model is, after an orthogonal change of coordinates, the Gaussian linear model with
one known regressor and unknown error variance, for which the equal-tailed $t$-test of a
linear hypothesis is uniformly most powerful unbiased \cite[Ch.~5 and Sec.~7.1]{LR05};
this gives the last assertion of (i).
\end{proof}

\begin{remark}\label{re:cicomment}
Corollary \ref{co:ci} gives exact intervals at every sample size with no appeal to
asymptotics, and the interval for $\mu$ is studentised, hence usable when $\beta$ is
unknown. This is the practical difference from \cite{KL15}, where the finite-sample law
is unavailable and the accuracy of the normal approximation has to be controlled by
Berry--Ess\'een bounds. The only model-dependent quantity to be computed is the scalar
$\kappa_N$, obtained from one Cholesky factorisation of $\Sig$.
\end{remark}

\section{Large-sample behaviour}\label{se:asymp}

Throughout this section the mesh $h>0$ is fixed and $N\to\infty$. We write
$\widehat\mu^{(N)}$ and $\widehat\beta^{\,2}_N$ for the estimators based on
$\bY^{(N)}=(Y(t_1),\dots,Y(t_N))^{\top}$, and
\[
  \kappa_N=\btau_N^{\top}\Sigma_{\tH,N}^{-1}\btau_N,
  \qquad
  v_N=\Var\bigl(\widehat\mu^{(N)}\bigr)=\frac{\beta^{2}}{\kappa_N} .
\]

\subsection{A variance bound valid at every $N$}

\begin{lemma}\label{le:kanto}
Let $A$ be a symmetric positive definite $N\times N$ matrix and $x\in\R^{N}$, $x\neq0$.
Then $x^{\top}A^{-1}x\ \ge\ \|x\|^{4}/(x^{\top}Ax)$.
\end{lemma}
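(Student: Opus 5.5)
The plan is to obtain the inequality from the Cauchy--Schwarz inequality in the inner product induced by $A$, using the symmetric positive definite square root of $A$ that exists as in Lemma \ref{le:nondeg}. Write $A^{1/2}$ for that square root and $A^{-1/2}$ for its inverse. Since $x\neq0$ and $A$ is positive definite, both $x^{\top}Ax>0$ and $x^{\top}A^{-1}x>0$. So it suffices to prove the equivalent product form
\[
  \|x\|^{4}\le\bigl(x^{\top}Ax\bigr)\bigl(x^{\top}A^{-1}x\bigr).
\]
Dividing by $x^{\top}Ax$ then gives the stated bound.

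The key step is to factor $\|x\|^{2}=x^{\top}x=(A^{1/2}x)^{\top}(A^{-1/2}x)$. Because $A^{1/2}$ is symmetric, we have $A^{1/2}A^{-1/2}=I_N$. The Cauchy--Schwarz inequality in $\R^{N}$ then gives
\[
  \|x\|^{2}\le\|A^{1/2}x\|\,\|A^{-1/2}x\|
  =\bigl(x^{\top}Ax\bigr)^{1/2}\bigl(x^{\top}A^{-1}x\bigr)^{1/2}.
\]
Squaring this yields the product form.

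There is no real obstacle here. The only point needing care is that the square roots are legitimate and that the denominator is nonzero, and both follow from positive definiteness. Equality holds exactly when $A^{1/2}x$ and $A^{-1/2}x$ are proportional, that is, when $x$ is an eigenvector of $A$; this is worth recording for later sharpness remarks, though it is not needed.

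The lemma will later be applied with $A=\Sig$ and $x=\btau$. This gives the lower bound $\kappa_N\ge\|\btau\|^{4}/(\btau^{\top}\Sig\btau)$, and hence an upper bound on $v_N$ in terms of the elementwise sums of $\Sig$.
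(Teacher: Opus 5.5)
Your proof is correct and is the same argument as the paper's: Cauchy--Schwarz for the standard Euclidean inner product, applied to $u=A^{1/2}x$ and $v=A^{-1/2}x$, gives $\|x\|^{4}\le(x^{\top}Ax)(x^{\top}A^{-1}x)$, and you then divide by $x^{\top}Ax>0$. Two small wording points: the inner product you actually use is the Euclidean one on $\R^{N}$, not one ``induced by $A$''; and symmetry of $A^{1/2}$ is what gives $(A^{1/2}x)^{\top}=x^{\top}A^{1/2}$, while $A^{1/2}A^{-1/2}=I_N$ holds by definition of the inverse.
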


\begin{proof}
By Cauchy--Schwarz applied to the inner product $\langle u,v\rangle=u^{\top}v$ with
$u=A^{1/2}x$ and $v=A^{-1/2}x$,
$\|x\|^{4}=\langle u,v\rangle^{2}\le\|u\|^{2}\|v\|^{2}
=(x^{\top}Ax)(x^{\top}A^{-1}x)$.
\end{proof}

\begin{proposition}\label{pr:varbound}
Under Assumption \ref{as:main}, write $c=\frac92$ if $\tH_r>\frac12$ for every $r$ and
$c=9$ otherwise. Then, for every $N\ge1$,
\begin{equation}\label{eq:varbound-sharp}
  \Var\bigl(\widehat\mu^{(N)}\bigr)\ \le\
  \frac{\beta^{2}}{\|\btau\|^{4}}
  \Bigl(\sum_{j=1}^{N}t_j\Bigr)
  \Bigl(\sum_{r=1}^{m}\sum_{i=1}^{N}t_i^{2\tH_r+1}\Bigr),
\end{equation}
and consequently
\begin{equation}\label{eq:varbound}
  \Var\bigl(\widehat\mu^{(N)}\bigr)\ \le\
  c\,\beta^{2}\sum_{r=1}^{m}(Nh)^{2\tH_r-2}
  =c\,\beta^{2}\sum_{r=1}^{m}t_N^{2\tH_r-2}.
\end{equation}
If moreover $Nh\ge1$, then
$\Var(\widehat\mu^{(N)})\le c\,m\beta^{2}(Nh)^{2\Hmax-2}$. In particular
$\widehat\mu^{(N)}$ is unbiased with $\Var(\widehat\mu^{(N)})\to0$, that is,
$\widehat\mu^{(N)}\to\mu$ in mean square as $N\to\infty$.
\end{proposition}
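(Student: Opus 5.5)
The plan is to combine the exact variance formula with Lemma~\ref{le:kanto}. By Theorem~\ref{th:exactlaw}(i), $\Var(\widehat\mu^{(N)})=\beta^{2}/\kappa_N$ with $\kappa_N=\btau^{\top}\Sig^{-1}\btau$. The matrix $\Sig$ is positive definite by Lemma~\ref{le:nondeg}, so Lemma~\ref{le:kanto} with $A=\Sig$ and $x=\btau$ gives
\[
  \Var\bigl(\widehat\mu^{(N)}\bigr)\le\frac{\beta^{2}\,\btau^{\top}\Sig\btau}{\|\btau\|^{4}}.
\]
Hence \eqref{eq:varbound-sharp} reduces to bounding the quadratic form
\[
  \btau^{\top}\Sig\btau=\sum_{r}\sum_{i,j}t_it_j\,R_{\tH_r}(t_i,t_j)
\]
from above, one component at a time. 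No inverse matrix ever needs to be controlled, which is why the absence of a Toeplitz structure is harmless here.

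The key step, and the only place where $\tH_r>\tfrac12$ enters, is an elementwise bound on the kernel. For $H>\tfrac12$ the map $x\mapsto x^{2H}$ is superadditive, so $(s+t)^{2H}\ge s^{2H}+t^{2H}$. Dropping the nonnegative term $|s-t|^{2H}$ then gives
\[
  R_H(s,t)\le\tfrac12\bigl(s^{2H}+t^{2H}\bigr).
\]
Inserting this and using symmetry in $(i,j)$,
\[
  \sum_{i,j}t_it_jR_H(t_i,t_j)\le\tfrac12\sum_{i,j}t_it_j\bigl(t_i^{2H}+t_j^{2H}\bigr)
  =\Bigl(\sum_jt_j\Bigr)\Bigl(\sum_it_i^{2H+1}\Bigr).
\]
Summing over $r$ yields exactly \eqref{eq:varbound-sharp}. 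For $\tH_r\le\tfrac12$ I would instead use the crude bound $R_H(s,t)\le s^{2H}+t^{2H}$ from Assumption~\ref{as:main}, valid because the bracket in \eqref{eq:sfbmcov} is nonnegative. This loses precisely a factor $2$, which accounts for $c=9$ in place of $9/2$; in that case \eqref{eq:varbound-sharp} carries the extra factor $2$.

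Next comes the arithmetic to reach \eqref{eq:varbound}. I would use the exact sums
\[
  \|\btau\|^{2}=h^{2}\frac{N(N+1)(2N+1)}{6},\qquad \sum_jt_j=h\frac{N(N+1)}{2},
\]
and the crude bound $\sum_it_i^{2H+1}\le h^{2H+1}N^{2H+2}$. The $r$-th term of the ratio is then at most
\[
  \frac{18\,h^{2H-2}N^{2H+1}}{(N+1)(2N+1)^{2}}\le\frac{18\,h^{2H-2}N^{2H+1}}{4N^{3}}=\tfrac92(Nh)^{2H-2}.
\]
This gives \eqref{eq:varbound}, with the factor $2$ giving $9$ when some $\tH_r\le\tfrac12$. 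For the refinement with $Nh\ge1$, each exponent satisfies $2\tH_r-2\le2\Hmax-2<0$, so $(Nh)^{2\tH_r-2}\le(Nh)^{2\Hmax-2}$, and summing over $r$ produces the factor $m$. Finally, unbiasedness comes from Theorem~\ref{th:exactlaw}(i), and the variance tends to $0$ because $2\Hmax-2<0$; this gives mean-square convergence.

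I expect no real obstacle. The only delicate point is choosing the kernel bound that produces the constant $9/2$ rather than $9$, namely discarding $|s-t|^{2H}$ and using superadditivity, and then matching that constant with the exact polynomial sums rather than cruder integral comparisons.
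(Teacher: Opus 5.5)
Your proof is correct and follows the paper's argument step for step: $\Var(\widehat\mu^{(N)})=\beta^{2}/\kappa_N$, then Lemma~\ref{le:kanto} with $A=\Sig$ and $x=\btau$, then the elementwise bound $(\Sigma_{\tH_r})_{ij}\le\frac12\bigl(t_i^{2\tH_r}+t_j^{2\tH_r}\bigr)$ from superadditivity, then symmetrisation in $(i,j)$, and finally the same exact sums producing the constant $\frac92$. Your remark that \eqref{eq:varbound-sharp} itself must carry the extra factor $2$ when some $\tH_r<\frac12$ is also right: for $m=1$ and $N=1$, where Lemma~\ref{le:kanto} is an equality, the unfactored inequality fails because $2-2^{2\tH_1-1}>1$, and this is exactly what the paper's ``we write it for the first'' leaves implicit.
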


\begin{proof}
Unbiasedness is Theorem \ref{th:exactlaw}(i). Combining
$\Var(\widehat\mu^{(N)})=\beta^{2}/\kappa_N$ with Lemma \ref{le:kanto} applied to
$A=\Sig$ and $x=\btau$,
\begin{equation}\label{eq:varstep}
  \Var\bigl(\widehat\mu^{(N)}\bigr)
  =\frac{\beta^{2}}{\btau^{\top}\Sig^{-1}\btau}
  \le\frac{\beta^{2}\,\btau^{\top}\Sig\btau}{\|\btau\|^{4}} .
\end{equation}
We bound the numerator. Fix $r$ and put $p=2\tH_r\in(0,2)$. For $p\ge1$ the map
$x\mapsto x^{p}$ is superadditive on $[0,\infty)$, since
$(s+t)^{p}=(s+t)(s+t)^{p-1}\ge s\,s^{p-1}+t\,t^{p-1}=s^{p}+t^{p}$. Hence, from
\eqref{eq:GammaHr} and
$|t_i-t_j|^{2\tH_r}\ge0$,
\[
  (\Sigma_{\tH_r})_{ij}
  \le t_i^{2\tH_r}+t_j^{2\tH_r}-\tfrac12\bigl(t_i^{2\tH_r}+t_j^{2\tH_r}\bigr)
  =\tfrac12\bigl(t_i^{2\tH_r}+t_j^{2\tH_r}\bigr),
\]
while for $p<1$ superadditivity fails and we use instead
$(\Sigma_{\tH_r})_{ij}\le t_i^{2\tH_r}+t_j^{2\tH_r}$, which holds for every $p$ because
the bracket subtracted in \eqref{eq:GammaHr} is nonnegative. The two bounds differ by a
factor of two, which is the difference between $c=\frac92$ and $c=9$; the rest of the
argument is identical, and we write it for the first.
and since $t_it_j\ge0$,
\[
  \btau^{\top}\Sigma_{\tH_r}\btau
  \le\frac12\sum_{i,j=1}^{N}t_it_j\bigl(t_i^{2\tH_r}+t_j^{2\tH_r}\bigr)
  =\Bigl(\sum_{i=1}^{N}t_i^{2\tH_r+1}\Bigr)\Bigl(\sum_{j=1}^{N}t_j\Bigr),
\]
the last step by symmetry in $i$ and $j$. Summing over $r$ and inserting in
\eqref{eq:varstep} gives \eqref{eq:varbound-sharp}.

For \eqref{eq:varbound} evaluate the elementary sums. With $t_k=kh$,
\[
  \sum_{j=1}^{N}t_j=\frac{hN(N+1)}{2},\qquad
  \|\btau\|^{2}=h^{2}\sum_{i=1}^{N}i^{2}=\frac{h^{2}N(N+1)(2N+1)}{6},
\]
so that, using $N+1\ge N$ and $(2N+1)^{2}\ge4N^{2}$,
\[
  \frac{\sum_{j=1}^{N}t_j}{\|\btau\|^{4}}
  =\frac{18}{h^{3}N(N+1)(2N+1)^{2}}\le\frac{9}{2h^{3}N^{4}} .
\]
Moreover $\sum_{i=1}^{N}t_i^{2\tH_r+1}=h^{2\tH_r+1}\sum_{i=1}^{N}i^{2\tH_r+1}
\le h^{2\tH_r+1}N^{2\tH_r+2}$. Substituting,
\[
  \Var\bigl(\widehat\mu^{(N)}\bigr)
  \le\frac{9\beta^{2}}{2h^{3}N^{4}}\sum_{r=1}^{m}h^{2\tH_r+1}N^{2\tH_r+2}
  =\frac92\beta^{2}\sum_{r=1}^{m}h^{2\tH_r-2}N^{2\tH_r-2},
\]
which is \eqref{eq:varbound}. If $Nh\ge1$ then $(Nh)^{2\tH_r-2}\le(Nh)^{2\Hmax-2}$ for
every $r$. Finally $2\Hmax-2<0$ forces $\Var(\widehat\mu^{(N)})\to0$, which with
unbiasedness is mean square convergence.
\end{proof}

\begin{remark}\label{re:samebound}
The constant in \eqref{eq:varbound} is the same one that the fractional model yields,
where the elementwise bound $\frac12(t_i^{2H}+t_j^{2H})$ is immediate from the covariance
function. In the sub-fractional case it is the superadditivity of
$x\mapsto x^{2\tH_r}$ that restores it, the two kernels differing precisely by the term
$(t_i+t_j)^{2\tH_r}$ that superadditivity controls.
\end{remark}

\begin{remark}\label{re:nosharp}
In the fractional case the sharpness of the order $(Nh)^{2\Hmax-2}$ can be read off from
the spectral density of the stationary increment sequence. By Remark \ref{re:nostat} no
such argument is available here, and we do not claim a matching lower bound. What
\eqref{eq:varbound} provides is an upper bound, which does not settle the rate from
below. Numerically the order does appear to be exact: on the design $t_k=k$ with
$\tH=(0.65,0.75,0.85)$, the fitted exponent of $\Var(\widehat\mu^{(N)})$ in $N$ over
$30\le N\le2000$ is $-0.437$, against $-0.378$ for the bound, so the two decay at
comparable rates while the bound remains the weaker of the two. Establishing a matching
lower bound analytically is an open problem, and \eqref{eq:evenpart} is the natural tool
for it.
\end{remark}

\subsection{Strong consistency}

\begin{theorem}\label{th:scmu}
Under Assumption \ref{as:main}, $\widehat\mu^{(N)}\to\mu$ almost surely as
$N\to\infty$.
\end{theorem}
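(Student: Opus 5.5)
By Theorem \ref{th:exactlaw}(i), $\widehat\mu^{(N)}-\mu\sim\Norm(0,v_N)$ exactly, with $v_N=\beta^{2}/\kappa_N$. Proposition \ref{pr:varbound} gives $v_N\le c\,m\beta^{2}(Nh)^{2\Hmax-2}$ once $Nh\ge1$. The plan is to turn these Gaussian tails into summable probabilities and apply Borel--Cantelli. Put $\alpha=2-2\Hmax>0$.

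A direct Chebyshev bound gives $\Prob(|\widehat\mu^{(N)}-\mu|>\varepsilon)\le CN^{-\alpha}/\varepsilon^{2}$. This is not summable when $\alpha\le1$, so Gaussianity has to be used. For $q\ge1$ the Gaussian moment identity gives
\[
  \E|\widehat\mu^{(N)}-\mu|^{2q}=\frac{(2q)!}{2^{q}q!}\,v_N^{q}\le C_q N^{-\alpha q}.
\]
Markov's inequality then yields
\[
  \Prob\bigl(|\widehat\mu^{(N)}-\mu|>\varepsilon\bigr)\le C_q\varepsilon^{-2q}N^{-\alpha q}.
\]
Choose $q$ with $\alpha q>1$. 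The series $\sum_N\Prob(|\widehat\mu^{(N)}-\mu|>\varepsilon)$ then converges for each fixed $\varepsilon>0$.

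By Borel--Cantelli, $|\widehat\mu^{(N)}-\mu|\le\varepsilon$ for all large $N$ almost surely. Intersecting over $\varepsilon=1/k$, $k\in\N$, gives $\widehat\mu^{(N)}\to\mu$ almost surely.

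The finitely many $N$ with $Nh<1$ do not affect summability. There one can use the bound \eqref{eq:varbound} directly, or simply note that each such $v_N$ is finite.

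Borel--Cantelli needs no joint law of the sequence $\{\widehat\mu^{(N)}\}_N$, only the marginal laws, which are exact. So the one point needing care is the choice of $q$ depending on $\Hmax$. This choice is what makes the argument work uniformly for all $\Hmax<1$, including the regime $\Hmax>\tfrac12$ where the second moment alone fails.

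An equivalent alternative is the Gaussian tail bound $\Prob(|Z|>x)\le 2e^{-x^{2}/2}$. Applied with $x=\varepsilon/\sqrt{v_N}\ge c'\varepsilon N^{\alpha/2}$, it gives a term $\exp(-c''\varepsilon^{2}N^{\alpha})$. This is summable for every $\alpha>0$ and avoids the choice of $q$ altogether.
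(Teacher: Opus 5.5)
Your proof is correct and is essentially the paper's argument: exact Gaussian marginals, the bound $v_N\le CN^{2\Hmax-2}$ from Proposition \ref{pr:varbound}, high Gaussian moments with Markov's inequality, then Borel--Cantelli. The only difference is that the paper uses the shrinking threshold $N^{-\gamma}$ with $0<\gamma<1-\Hmax$ instead of a fixed $\varepsilon$, which avoids intersecting over $\varepsilon=1/k$ and gives the rate $|\widehat\mu^{(N)}-\mu|\le N^{-\gamma}$ eventually.
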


\begin{proof}
By Theorem \ref{th:exactlaw}(i) the variable $\widehat\mu^{(N)}-\mu$ is centred
Gaussian with variance $v_N$, and by Proposition \ref{pr:varbound}
$v_N\le C N^{2\Hmax-2}$. Fix $0<\gamma<1-\Hmax$ and $q\ge2$. By the Markov inequality and
the equivalence of Gaussian moments,
$\E|\widehat\mu^{(N)}-\mu|^{q}=c_q v_N^{q/2}$, so
\[
  \Prob\Bigl(\bigl|\widehat\mu^{(N)}-\mu\bigr|>N^{-\gamma}\Bigr)
  \le N^{q\gamma}\,\E\bigl|\widehat\mu^{(N)}-\mu\bigr|^{q}
  \le c_qC^{q/2}N^{q\gamma+(\Hmax-1)q}.
\]
Since $\gamma+\Hmax-1<0$, choosing $q$ with $q(\gamma+\Hmax-1)<-1$ makes the series over
$N$ summable, and the Borel--Cantelli lemma gives
$|\widehat\mu^{(N)}-\mu|\le N^{-\gamma}$ eventually, almost surely.
\end{proof}

\begin{theorem}\label{th:scbeta}
Under Assumption \ref{as:main}, $\widehat\beta^{\,2}_N\to\beta^{2}$ almost surely as
$N\to\infty$.
\end{theorem}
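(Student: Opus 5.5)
The plan is to use the exact law of Theorem \ref{th:exactlaw}(ii) together with the Borel--Cantelli argument already used for Theorem \ref{th:scmu}. For each $N$, Theorem \ref{th:exactlaw}(ii) and Lemma \ref{le:canon} show that $Q_N=N\widehat\beta^{\,2}_N/\beta^{2}$ has the law $\chi^{2}_{N-1}$. Equivalently, $Q_N$ has the law of $\sum_{k=1}^{N-1}\xi_k^{2}$ with $\xi_k$ i.i.d.\ standard normal. We cannot invoke the strong law of large numbers directly. The sequence $\{Q_N\}$ is not built from a single i.i.d.\ sequence, because the whitening matrix $\Sig^{-1/2}$ and the vector $\be$ change with $N$. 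We therefore work only with the marginal laws, which are all we know, and obtain almost sure convergence through summable tail probabilities.

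Write
\[
\widehat\beta^{\,2}_N-\beta^{2}=\beta^{2}\Bigl(\frac{Q_N-(N-1)}{N}-\frac1N\Bigr).
\]
The deterministic term $-\beta^{2}/N$ tends to $0$, so it suffices to show that $(Q_N-(N-1))/N\to0$ almost surely. Fix $\varepsilon>0$ and an even integer $q\ge4$. The centred sum $Q_N-(N-1)=\sum_{k=1}^{N-1}(\xi_k^{2}-1)$ has i.i.d.\ mean-zero summands with all moments finite. By Rosenthal's inequality, or by directly expanding the fourth moment,
\[
\E\bigl|Q_N-(N-1)\bigr|^{q}\le C_q N^{q/2}.
\]
Markov's inequality then gives
\[
\Prob\bigl(|Q_N-(N-1)|>\varepsilon N\bigr)\le C_q\varepsilon^{-q}N^{-q/2}.
\]
This is summable for $q=4$. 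Borel--Cantelli yields $\limsup_N|Q_N-(N-1)|/N\le\varepsilon$ almost surely, and letting $\varepsilon\downarrow0$ along a countable sequence proves the claim.

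An alternative that avoids moments is a Chernoff bound for $\chi^{2}_{N-1}$, for instance the Laurent--Massart inequality. It gives tail probabilities of order $e^{-cN\varepsilon^{2}}$, which are also summable.

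I do not expect a serious obstacle. The one point to state carefully is that Borel--Cantelli needs only the marginal laws of each $Q_N$, with no independence across $N$ and no coupling of the different whitenings. Because of this, the lack of a common probabilistic representation of $\{Q_N\}_N$, which comes from the non-Toeplitz covariance of Remark \ref{re:nostat}, causes no difficulty.
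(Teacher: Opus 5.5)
Your proof is correct. Its skeleton is the paper's: the marginal law of Theorem \ref{th:exactlaw}(ii), a $q$-th moment bound, Markov's inequality and Borel--Cantelli, with no coupling across $N$ required.

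The difference lies only in how the higher moment is controlled. The paper takes just the second moment $\E[(\widehat\beta^{\,2}_N-\beta^{2})^{2}]\le2\beta^{4}/N$ from Corollary \ref{co:moments}. It lifts this to $L^{q}$ by hypercontractivity for polynomials of degree at most two in a Gaussian vector, and uses shrinking thresholds $N^{-\delta}$ with $\delta<\frac12$. You instead use the full $\chi^{2}_{N-1}$ law: you represent $Q_N$ in distribution as a sum of $N-1$ i.i.d.\ centred squares and apply Rosenthal (or the explicit fourth central moment $12(N-1)(N+3)$) with a fixed $\varepsilon$.

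Your route is more elementary and loses nothing. Running the same bound with threshold $N^{-\delta}$ and $q$ large reproduces the paper's rate. Your Laurent--Massart alternative, taken with $x=2\log N$, even gives $|\widehat\beta^{\,2}_N-\beta^{2}|=O(\sqrt{\log N/N})$ almost surely.

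What the paper's route buys is independence from the exact law. It needs only an $L^{2}$ bound and the fact that $\widehat\beta^{\,2}_N$ is a quadratic functional of a Gaussian vector. It would therefore carry over to settings where that functional is no longer pivotal, in the same spirit as Remark \ref{re:malliavin}.
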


\begin{proof}
By Theorem \ref{th:exactlaw}(ii) and Corollary \ref{co:moments},
$\E[(\widehat\beta^{\,2}_N-\beta^{2})^{2}]=(2N-1)\beta^{4}/N^{2}\le2\beta^{4}/N$. Fix
$0<\delta<\frac12$ and $q\ge2$. The variable $\widehat\beta^{\,2}_N-\beta^{2}$ is a
polynomial of degree two in $\bX$, that is, it lies in
$\bigoplus_{k\le2}\Hi^{:k:}$; it is not itself an element of the second chaos, its mean
being $-\beta^{2}/N$. Hypercontractivity on chaoses of bounded order
\cite[Thm.~5.10]{Jan97} applies to that space and supplies a constant $c_q$ depending
only on $q$, and in particular neither on $N$ nor on the dimension of the underlying
Gaussian space, with
$\E|\widehat\beta^{\,2}_N-\beta^{2}|^{q}\le c_q(\E[(\widehat\beta^{\,2}_N
-\beta^{2})^{2}])^{q/2}$. Hence
\[
  \Prob\Bigl(\bigl|\widehat\beta^{\,2}_N-\beta^{2}\bigr|>N^{-\delta}\Bigr)
  \le N^{q\delta}\,\E\bigl|\widehat\beta^{\,2}_N-\beta^{2}\bigr|^{q}
  \le c_q(2\beta^{4})^{q/2}N^{q(\delta-1/2)},
\]
which is summable for $q$ large enough since $\delta<\frac12$. Borel--Cantelli concludes.
\end{proof}

\subsection{The structure of the sequence $\{\widehat\mu^{(N)}\}$}

The next result does not depend on the sub-fractional structure at all, only on the
nesting of the samples; we record it because it gives a transparent second proof of
Theorem \ref{th:scmu} and because the resulting picture is a compact description of
how the drift estimator improves with $N$.

\begin{theorem}\label{th:indincr}
Set $\xi_N=\widehat\mu^{(N)}-\mu$. Then $(\xi_N)_{N\ge2}$ is a centred Gaussian
sequence with
\begin{equation}\label{eq:covxi}
  \E\bigl[\xi_N\xi_{N'}\bigr]=v_{N\vee N'},\qquad N,N'\ge2,
\end{equation}
the sequence $(v_N)$ is nonincreasing, and for any $N_1<N_2<\dots<N_k$ the variables
\[
  \xi_{N_k},\quad \xi_{N_{k-1}}-\xi_{N_k},\quad\dots,\quad \xi_{N_1}-\xi_{N_2}
\]
are independent, with $\Var(\xi_{N_j}-\xi_{N_{j+1}})=v_{N_j}-v_{N_{j+1}}$.
\end{theorem}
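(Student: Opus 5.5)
The whole statement reduces to the covariance identity \eqref{eq:covxi}, which I will obtain from the fact that $\widehat\mu^{(N)}$ is the generalized least squares estimator, via a Gauss--Markov orthogonality argument. First, all the $\xi_N$ are linear functionals of the single Gaussian process $\beta S^{\tH}$. Indeed, by \eqref{eq:mle} we have $\xi_N=\btau_N^{\top}\Sigma_{\tH,N}^{-1}\bU^{(N)}/\kappa_N$ with $\bU^{(N)}=\beta\bS^{\tH,(N)}$. Hence $(\xi_N)_{N\ge2}$ is jointly centred Gaussian. Nonnegativity and monotonicity of $v_N$ also follow, and monotonicity would come out of the covariance identity anyway.

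To get \eqref{eq:covxi}, take $N<N'$. Write $\bU'=\bU^{(N')}$ and $\Sigma'=\Sigma_{\tH,N'}$, and let $P$ denote the $N\times N'$ coordinate projection, so that $\bU^{(N)}=P\bU'$. Put $a=\Sigma_{\tH,N}^{-1}\btau_N/\kappa_N$ and $a'=\Sigma'^{-1}\btau_{N'}/\kappa_{N'}$. Then
\[
\E[\xi_N\xi_{N'}]=\beta^{2}a^{\top}P\Sigma'a'=\beta^{2}\,a^{\top}P\btau_{N'}/\kappa_{N'} .
\]
Since $P\btau_{N'}=\btau_N$ and $a^{\top}\btau_N=1$, this equals $\beta^{2}/\kappa_{N'}=v_{N'}=v_{N\vee N'}$. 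The structural fact being used is unbiasedness of $\widehat\mu^{(N)}$ combined with the GLS identity $\Sigma'a'\propto\btau_{N'}$. Equivalently, $\widehat\mu^{(N)}-\widehat\mu^{(N')}$ is an unbiased linear contrast, and it is uncorrelated with the efficient estimator $\widehat\mu^{(N')}$. With $N=N'$ the identity is just Theorem \ref{th:exactlaw}(i). Monotonicity then follows from
\[
0\le\Var(\xi_N-\xi_{N'})=v_N-2v_{N'}+v_{N'}=v_N-v_{N'} .
\]
This already gives the variance formula for the differences.

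For the independence claim, note that the variables listed are linear images of the jointly Gaussian vector $(\xi_{N_1},\dots,\xi_{N_k})$. So it suffices to show they are pairwise uncorrelated. For $i<j$ (so $N_i<N_j$), \eqref{eq:covxi} gives
\[
\E[(\xi_{N_i}-\xi_{N_{i+1}})(\xi_{N_j}-\xi_{N_{j+1}})]=v_{N_j}-v_{N_{j+1}}-v_{N_j}+v_{N_{j+1}}=0 .
\]
The same identity covers pairs involving $\xi_{N_k}$: for every $i<k$,
\[
\E[\xi_{N_k}(\xi_{N_i}-\xi_{N_{i+1}})]=v_{N_k}-v_{N_k}=0 .
\]
This is exactly the covariance structure of a Brownian motion run along decreasing times $v_N$, with $\xi_N=W(v_N)$.

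The only real step is the covariance computation, specifically the identity $P\btau_{N'}=\btau_N$. This holds because the samples are nested and the regressor is the same function $t\mapsto t$. Everything else is Gaussian bookkeeping, and no property of the sub-fractional kernel beyond Lemma \ref{le:nondeg} is used.
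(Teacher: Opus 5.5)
Your proof is correct and follows the paper's argument: your identity $\Sigma'a'=\btau_{N'}/\kappa_{N'}$ together with $P\btau_{N'}=\btau_N$ is the paper's observation that the first $N$ rows of $\Sigma'\Sigma'^{-1}$ form the coordinate projection, and the independence of the increments is then checked through the same vanishing covariances. The differences are cosmetic: you get monotonicity of $(v_N)$ from $\Var(\xi_N-\xi_{N'})\ge0$ rather than from Cauchy--Schwarz, and you justify joint (not merely marginal) Gaussianity explicitly by writing every $\xi_N$ as a linear functional of the single process $\beta S^{\tH}$, a point the paper's appeal to Theorem~\ref{th:exactlaw}(i) leaves implicit.
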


\begin{proof}
Gaussianity and centring are Theorem \ref{th:exactlaw}(i). Let $N<N'$ and write
$\Sigma'=\Sigma_{\tH,N'}$, $\Sigma=\Sigma_{\tH,N}=\Sigma'_{1:N,1:N}$, and
$\bU'=\bY^{(N')}-\mu\btau'$, so that $\Cov(\bU')=\beta^{2}\Sigma'$ and $\bU$ consists of
the first $N$ coordinates of $\bU'$. Then
$\xi_N=\btau^{\top}\Sigma^{-1}\bU/\kappa_N$ and $\xi_{N'}=\btau'^{\top}\Sigma'^{-1}\bU'/d_{N'}$,
whence
\[
  \E[\xi_N\xi_{N'}]
  =\frac{\beta^{2}}{\kappa_N\kappa_{N'}}\,
   \btau^{\top}\Sigma^{-1}\bigl[\Sigma'\bigr]_{1:N,\,\cdot}\,\Sigma'^{-1}\btau' .
\]
The matrix $[\Sigma']_{1:N,\cdot}\Sigma'^{-1}$ consists of the first $N$ rows of the
identity, so $[\Sigma']_{1:N,\cdot}\Sigma'^{-1}\btau'=\btau$ and the numerator reduces to
$\beta^{2}\btau^{\top}\Sigma^{-1}\btau=\beta^{2}\kappa_N$. Hence
$\E[\xi_N\xi_{N'}]=\beta^{2}/d_{N'}=v_{N'}$, which is \eqref{eq:covxi}.

Monotonicity follows: by \eqref{eq:covxi} and Cauchy--Schwarz,
$v_{N'}=\E[\xi_N\xi_{N'}]\le\sqrt{v_Nv_{N'}}$, so $v_{N'}\le v_N$. For the last claim it
suffices, by joint Gaussianity, to check that the covariances vanish. For $N<N'$,
\eqref{eq:covxi} gives $\E[(\xi_N-\xi_{N'})\xi_{N'}]=v_{N'}-v_{N'}=0$, and for
$N_1<N_2\le N_3<N_4$,
\[
  \E\bigl[(\xi_{N_1}-\xi_{N_2})(\xi_{N_3}-\xi_{N_4})\bigr]
  =v_{N_3}-v_{N_4}-v_{N_3}+v_{N_4}=0 .
\]
Finally $\Var(\xi_{N}-\xi_{N'})=v_N-2v_{N'}+v_{N'}=v_N-v_{N'}$.
\end{proof}

\begin{corollary}\label{co:tcbm}
Let $W=\{W(u),\,u\ge0\}$ be a standard Brownian motion. Then, as sequences,
\[
  \bigl(\widehat\mu^{(N)}-\mu\bigr)_{N\ge2}
  \overset{d}{=}\bigl(W(v_N)\bigr)_{N\ge2} .
\]
That is, the sequence of drift estimators is, in law, a Brownian motion evaluated along
the decreasing sequence of its own variances. Since $v_N\downarrow0$ by
Proposition \ref{pr:varbound}, and $W$ is almost surely continuous at $0$ with $W(0)=0$,
this gives a second proof of Theorem \ref{th:scmu}.
\end{corollary}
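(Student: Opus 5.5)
Since $(\xi_N)_{N\ge2}$ is centred Gaussian by Theorem \ref{th:indincr}, its law as a sequence is determined by its covariance function, namely $\E[\xi_N\xi_{N'}]=v_{N\vee N'}$ from \eqref{eq:covxi}. The process $(W(v_N))_{N\ge2}$ is also a centred Gaussian sequence, with
\[
  \E\bigl[W(v_N)W(v_{N'})\bigr]=v_N\wedge v_{N'} .
\]
Because $(v_N)$ is nonincreasing (Theorem \ref{th:indincr}), we have $v_N\wedge v_{N'}=v_{N\vee N'}$, so the two covariance functions coincide. Equality of all finite-dimensional distributions follows, and hence equality in law as random elements of $\R^{\N}$ with its product $\sigma$-field, by Kolmogorov's extension theorem or by the $\pi$-$\lambda$ theorem applied to cylinder sets. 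This already gives the displayed identity in law.

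For the second proof of Theorem \ref{th:scmu}, Proposition \ref{pr:varbound} gives $v_N\to0$, and $v_N$ is monotone, so $v_N\downarrow0$. Since $W$ has almost surely continuous paths with $W(0)=0$, we get $W(v_N)\to0$ almost surely.

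The event $\{x\in\R^{\N}:x_N\to0\}$ is measurable in the product $\sigma$-field, being $\bigcap_k\bigcup_M\bigcap_{N\ge M}\{|x_N|\le1/k\}$. Equality in law of the two sequences therefore transfers the almost sure convergence: $\Prob(\xi_N\to0)=\Prob(W(v_N)\to0)=1$, that is, $\widehat\mu^{(N)}\to\mu$ almost surely.

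No step is a real obstacle. The only point needing care is the final transfer, where we must observe that convergence to zero is a measurable event of the sequence alone, so that equality of laws on $\R^{\N}$, and not any coupling, suffices. The monotonicity of $v_N$ is used twice: for the identity $\min=v_{\max}$ and for $v_N\downarrow0$. Strict decrease is not needed, since repeated values simply correspond to evaluating $W$ at the same time.
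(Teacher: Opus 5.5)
Your proposal is correct and follows essentially the same route as the paper. You match the common covariance $v_{N\vee N'}=v_N\wedge v_{N'}$ of the two centred Gaussian sequences, conclude equality of their laws on $\R^{\N}$ with the product $\sigma$-field, and transfer the measurable event $\{x\in\R^{\N}:x_N\to0\}$ to get the second proof of Theorem \ref{th:scmu}; your explicit measurability decomposition only adds detail the paper leaves implicit.
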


\begin{proof}
Both sequences are centred Gaussian and, by Theorem \ref{th:indincr} and the covariance
of Brownian motion, both have covariance $v_{N\vee N'}=\min(v_N,v_{N'})$. Centred
Gaussian sequences with equal covariance have the same law on $\R^{\N}$ with the product
$\sigma$-field. The set $\{x\in\R^{\N}:x_N\to0\}$ belongs to that $\sigma$-field, so the
two sequences assign it the same probability, and $\Prob(W(v_N)\to0)=1$.
\end{proof}

\subsection{Asymptotic normality}

\begin{theorem}\label{th:an}
Under Assumption \ref{as:main},
\[
  \frac{\sqrt{\kappa_N}}{\beta}\bigl(\widehat\mu^{(N)}-\mu\bigr)\sim\Norm(0,1)
  \quad\text{for every }N\ge2,
  \qquad
  \sqrt{\frac{N}{2}}\left(\frac{\widehat\beta^{\,2}_N}{\beta^{2}}-1\right)
  \xrightarrow{\ d\ }\Norm(0,1),
\]
and the studentised statistic $\sqrt{\kappa_N}(\widehat\mu^{(N)}-\mu)/\widetilde\beta$
is exactly $t_{N-1}$ distributed, hence also asymptotically standard normal.
\end{theorem}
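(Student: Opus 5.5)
The statement has three parts, and all of them reduce to the exact laws of Theorem \ref{th:exactlaw} and Corollary \ref{co:ci}. No further property of $\Sig$ is needed beyond Lemma \ref{le:nondeg}, which is already built into those results.

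\emph{The drift.} By Theorem \ref{th:exactlaw}(i), $\widehat\mu^{(N)}-\mu\sim\Norm(0,\beta^{2}/\kappa_N)$. Multiplying by $\sqrt{\kappa_N}/\beta$ gives exactly $\Norm(0,1)$ at every $N\ge2$. No limit is involved, and the constant sequence of laws converges trivially.

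\emph{The scale.} By Theorem \ref{th:exactlaw}(ii), $Q_N=N\widehat\beta^{\,2}_N/\beta^{2}\sim\chi^{2}_{N-1}$. Taking $\zeta_1,\zeta_2,\dots$ i.i.d. $\Norm(0,1)$, we may represent $Q_N$ in law as $\sum_{k=1}^{N-1}\zeta_k^{2}$. The statistic becomes
\[
  \sqrt{\tfrac N2}\Bigl(\tfrac{Q_N}{N}-1\Bigr)
  =\sqrt{\tfrac{N-1}{N}}\cdot\frac{Q_N-(N-1)}{\sqrt{2(N-1)}}-\frac{1}{\sqrt{2N}} .
\]
The middle fraction is a normalised sum of $N-1$ i.i.d. centred variables $\zeta_k^{2}-1$ with variance $2$. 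The classical Lindeberg--L\'evy central limit theorem therefore sends it to $\Norm(0,1)$ in law. The prefactor tends to $1$ and the shift $1/\sqrt{2N}$ tends to $0$, so Slutsky's lemma gives the claimed limit. Since only the marginal law at each $N$ matters for convergence in distribution, using an auxiliary representation of $Q_N$ is legitimate.

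\emph{The studentised drift.} Corollary \ref{co:ci} gives the exact $t_{N-1}$ law. For the limit I would write $T_N=\xi/\sqrt{Q/(N-1)}$ with $\xi\sim\Norm(0,1)$ and $Q\sim\chi^{2}_{N-1}$. By the law of large numbers for the same representation, $Q/(N-1)\to1$ in probability, and Slutsky then gives $T_N\to\Norm(0,1)$ in law. Alternatively, one can check pointwise convergence of the Student densities to the Gaussian density and apply Scheff\'e's lemma.

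None of the steps is a real obstacle. The only point needing care is in the scale part, where the centring must be $N-1$ rather than $N$. The resulting bias of order $N^{-1/2}$ must be shown to vanish, which is the $1/\sqrt{2N}$ term handled above.
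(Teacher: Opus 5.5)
Your proposal is correct and follows essentially the same route as the paper. The drift statement is read off from Theorem~\ref{th:exactlaw}(i), the scale limit uses $Q_N\sim\chi^{2}_{N-1}$ with the identical decomposition and the central limit theorem, and the studentised statement is Corollary~\ref{co:ci} together with $t_{N-1}\Rightarrow\Norm(0,1)$, a convergence the paper takes as standard and you justify via Slutsky or Scheff\'e.
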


\begin{proof}
The first assertion is Theorem \ref{th:exactlaw}(i), and it is exact rather than
asymptotic. For the second, $Q_N=N\widehat\beta^{\,2}_N/\beta^{2}\sim\chi^{2}_{N-1}$ by
Theorem \ref{th:exactlaw}(ii), so $(Q_N-(N-1))/\sqrt{2(N-1)}\to\Norm(0,1)$ by the central
limit theorem, and
\[
  \sqrt{\frac N2}\left(\frac{\widehat\beta^{\,2}_N}{\beta^{2}}-1\right)
  =\sqrt{\frac{N-1}{N}}\cdot\frac{Q_N-(N-1)}{\sqrt{2(N-1)}}-\frac{1}{\sqrt{2N}},
\]
where the first factor tends to $1$ and the last term to $0$. The third assertion is
Corollary \ref{co:ci} together with $t_{N-1}\Rightarrow\Norm(0,1)$.
\end{proof}

\begin{remark}\label{re:malliavin}
The convergence of $F_N=\beta^{-2}\sqrt{N/2}\,(\widehat\beta^{\,2}_N-\beta^{2})$ can also
be obtained from the Nualart--Ortiz-Latorre criterion \cite{NOL08}. By
Lemma \ref{le:canon}, $F_N$ belongs to the second Wiener chaos of the isonormal process
generated by $\bX$, and a direct computation of the Malliavin derivative gives
$\|DF_N\|_{\Hi}^{2}=2\widehat\beta^{\,2}_N/\beta^{2}$, which converges to $2$ in $L^{2}$
by Corollary \ref{co:moments}; the criterion then yields
$F_N\Rightarrow\Norm(0,1)$, together with $\E[F_N^{2}]=(N-1)/N\to1$. We record this
because the argument survives in situations where the exact law is unavailable, in
particular in the component-specific-scale model of Remark \ref{re:glm}. In the present
model it is not needed, since Theorem \ref{th:exactlaw}(ii) makes the law of
$\widehat\beta^{\,2}_N$ explicit at every $N$.
\end{remark}

\section{Simulation study}\label{se:sim}

\subsection{Experimental design}

All experiments use $m=3$ components with $\tH=(0.65,0.75,0.85)$, three parameter
scenarios
\[
  \text{Scenario 1: }(\mu,\beta)=(0.5,0.4),\quad
  \text{Scenario 2: }(1.5,1.0),\quad
  \text{Scenario 3: }(3.0,2.0),
\]
and the two meshes $h=1/252$ and $h=1/12$, corresponding to daily and monthly sampling on
a yearly time scale. Every reported quantity is based on $20\,000$ independent
replications. A replication is generated by forming $\Sig$ from \eqref{eq:GammaHr},
computing its Cholesky factor $L$ and setting
$\bY=\mu\btau+\beta L\varepsilon$ with $\varepsilon\sim\Norm(\bze,I_N)$; the estimators
are then evaluated from \eqref{eq:mle} using the same factorisation, so that no matrix is
solved by that factorisation rather than by forming $\Sig^{-1}$. The condition number
of $\Sig$ never exceeded $8\times10^{6}$ in any configuration reported below, so all
linear solves are numerically benign.

\subsection{Bias and dispersion}\label{sse:point}

\begin{table}[htbp]
\caption{Point estimation, $h=1/252$, $\tH=(0.65,0.75,0.85)$, $20\,000$ replications.}
\label{ta:pe252}
\footnotesize
\setlength{\tabcolsep}{5pt}
\renewcommand{\arraystretch}{0.95}
\begin{tabular}{@{}cccclrrrr@{}}
\toprule
Scenario & $\mu$ & $\beta$ & $N$ & & Mean & Bias & Std.\ dev. & MSE\\
\midrule
1 & 0.5 & 0.4 & 30 & $\widehat\mu$ & 0.5037 & \phantom{$-$}0.0037 & 0.9329 & 0.8703\\
 & & & & $\widehat\beta$ & 0.3897 & $-$0.0103 & 0.0513 & 0.0027\\
 & & & 100 & $\widehat\mu$ & 0.4971 & $-$0.0029 & 0.6472 & 0.4188\\
 & & & & $\widehat\beta$ & 0.3968 & $-$0.0032 & 0.0284 & 0.0008\\
 & & & 300 & $\widehat\mu$ & 0.5014 & \phantom{$-$}0.0014 & 0.4775 & 0.2280\\
 & & & & $\widehat\beta$ & 0.3989 & $-$0.0011 & 0.0162 & 0.0003\\
 & & & 500 & $\widehat\mu$ & 0.4982 & $-$0.0018 & 0.4166 & 0.1735\\
 & & & & $\widehat\beta$ & 0.3993 & $-$0.0007 & 0.0126 & 0.0002\\
\addlinespace
2 & 1.5 & 1.0 & 30 & $\widehat\mu$ & 1.4986 & $-$0.0014 & 2.3213 & 5.3883\\
 & & & & $\widehat\beta$ & 0.9753 & $-$0.0247 & 0.1280 & 0.0170\\
 & & & 100 & $\widehat\mu$ & 1.5153 & \phantom{$-$}0.0153 & 1.6305 & 2.6588\\
 & & & & $\widehat\beta$ & 0.9935 & $-$0.0065 & 0.0704 & 0.0050\\
 & & & 300 & $\widehat\mu$ & 1.4966 & $-$0.0034 & 1.1952 & 1.4285\\
 & & & & $\widehat\beta$ & 0.9978 & $-$0.0022 & 0.0408 & 0.0017\\
 & & & 500 & $\widehat\mu$ & 1.5175 & \phantom{$-$}0.0175 & 1.0350 & 1.0715\\
 & & & & $\widehat\beta$ & 0.9986 & $-$0.0014 & 0.0320 & 0.0010\\
\addlinespace
3 & 3.0 & 2.0 & 30 & $\widehat\mu$ & 2.9783 & $-$0.0217 & 4.6563 & 21.6802\\
 & & & & $\widehat\beta$ & 1.9502 & $-$0.0498 & 0.2578 & 0.0689\\
 & & & 100 & $\widehat\mu$ & 2.9936 & $-$0.0064 & 3.2378 & 10.4830\\
 & & & & $\widehat\beta$ & 1.9842 & $-$0.0158 & 0.1413 & 0.0202\\
 & & & 300 & $\widehat\mu$ & 3.0195 & \phantom{$-$}0.0195 & 2.3617 & 5.5779\\
 & & & & $\widehat\beta$ & 1.9951 & $-$0.0049 & 0.0814 & 0.0066\\
 & & & 500 & $\widehat\mu$ & 3.0057 & \phantom{$-$}0.0057 & 2.0730 & 4.2970\\
 & & & & $\widehat\beta$ & 1.9972 & $-$0.0028 & 0.0635 & 0.0040\\
\bottomrule
\end{tabular}
\end{table}

\begin{table}[htbp]
\caption{Point estimation, $h=1/12$, $\tH=(0.65,0.75,0.85)$, $20\,000$ replications.}
\label{ta:pe12}
\footnotesize
\setlength{\tabcolsep}{5pt}
\renewcommand{\arraystretch}{0.95}
\begin{tabular}{@{}cccclrrrr@{}}
\toprule
Scenario & $\mu$ & $\beta$ & $N$ & & Mean & Bias & Std.\ dev. & MSE\\
\midrule
1 & 0.5 & 0.4 & 30 & $\widehat\mu$ & 0.5025 & \phantom{$-$}0.0025 & 0.3919 & 0.1536\\
 & & & & $\widehat\beta$ & 0.3898 & $-$0.0102 & 0.0515 & 0.0028\\
 & & & 100 & $\widehat\mu$ & 0.4998 & $-$0.0002 & 0.2850 & 0.0812\\
 & & & & $\widehat\beta$ & 0.3970 & $-$0.0030 & 0.0284 & 0.0008\\
 & & & 300 & $\widehat\mu$ & 0.4979 & $-$0.0021 & 0.2171 & 0.0471\\
 & & & & $\widehat\beta$ & 0.3990 & $-$0.0010 & 0.0164 & 0.0003\\
 & & & 500 & $\widehat\mu$ & 0.5007 & \phantom{$-$}0.0007 & 0.1906 & 0.0363\\
 & & & & $\widehat\beta$ & 0.3993 & $-$0.0007 & 0.0126 & 0.0002\\
\addlinespace
2 & 1.5 & 1.0 & 30 & $\widehat\mu$ & 1.5064 & \phantom{$-$}0.0064 & 0.9777 & 0.9558\\
 & & & & $\widehat\beta$ & 0.9754 & $-$0.0246 & 0.1280 & 0.0170\\
 & & & 100 & $\widehat\mu$ & 1.4916 & $-$0.0084 & 0.7124 & 0.5076\\
 & & & & $\widehat\beta$ & 0.9929 & $-$0.0071 & 0.0708 & 0.0051\\
 & & & 300 & $\widehat\mu$ & 1.5021 & \phantom{$-$}0.0021 & 0.5410 & 0.2927\\
 & & & & $\widehat\beta$ & 0.9975 & $-$0.0025 & 0.0410 & 0.0017\\
 & & & 500 & $\widehat\mu$ & 1.5041 & \phantom{$-$}0.0041 & 0.4837 & 0.2340\\
 & & & & $\widehat\beta$ & 0.9981 & $-$0.0019 & 0.0316 & 0.0010\\
\addlinespace
3 & 3.0 & 2.0 & 30 & $\widehat\mu$ & 2.9946 & $-$0.0054 & 1.9691 & 3.8773\\
 & & & & $\widehat\beta$ & 1.9484 & $-$0.0516 & 0.2567 & 0.0686\\
 & & & 100 & $\widehat\mu$ & 3.0087 & \phantom{$-$}0.0087 & 1.4205 & 2.0177\\
 & & & & $\widehat\beta$ & 1.9869 & $-$0.0131 & 0.1417 & 0.0203\\
 & & & 300 & $\widehat\mu$ & 3.0058 & \phantom{$-$}0.0058 & 1.0890 & 1.1859\\
 & & & & $\widehat\beta$ & 1.9959 & $-$0.0041 & 0.0819 & 0.0067\\
 & & & 500 & $\widehat\mu$ & 3.0013 & \phantom{$-$}0.0013 & 0.9530 & 0.9083\\
 & & & & $\widehat\beta$ & 1.9970 & $-$0.0030 & 0.0631 & 0.0040\\
\bottomrule
\end{tabular}
\end{table}

Tables \ref{ta:pe252} and \ref{ta:pe12} report the empirical mean, bias, standard
deviation and mean square error of $\widehat\mu$ and $\widehat\beta$ in the three
scenarios. The estimators behave as the theory requires. The bias of $\widehat\mu$ is
never significant, as it must be by Theorem \ref{th:exactlaw}(i). The bias of
$\widehat\beta$ is negative and shrinks like $1/N$, in agreement with
$\E[\widehat\beta]=c_N\beta$ and $c_N\uparrow1$ from Corollary \ref{co:moments}: at
$N=30$ the predicted relative bias is $1-c_{30}=0.0252$, and the observed relative biases
are $0.0258$, $0.0247$ and $0.0249$ across the three scenarios; at $N=500$ the prediction
is $0.0015$ and the observed values are $0.0018$, $0.0014$ and $0.0014$.

The contrast between the two parameters is the striking feature. The standard deviation
of $\widehat\beta$ falls by a factor of about four between $N=30$ and $N=500$, close to
the $\sqrt{N}$ prediction of Corollary \ref{co:moments}, and it is the same in both
tables, as Remark \ref{re:pivot} requires: the law of $\widehat\beta$ does not see $h$ at
all. The standard deviation of $\widehat\mu$, in contrast, falls only by a factor of
about $2.2$ over the same range, and it depends strongly on $h$: at $N=30$ it is $2.37$
times smaller at $h=1/12$ than at $h=1/252$, and at $N=500$ it is $2.14$ times smaller.
Both observations are explained by Proposition \ref{pr:varbound}: what governs
$\Var(\widehat\mu)$ is the window length $t_N=Nh$, not the sample size. The agreement
with the exact formula $\Var(\widehat\mu)=\beta^{2}/\kappa_N$ is complete; the four values
of $\beta/\sqrt{\kappa_N}$ in Scenario 2, at $(N,h)=(30,1/252)$, $(30,1/12)$,
$(500,1/252)$ and $(500,1/12)$, are $2.3264$, $0.9800$, $1.0358$ and $0.4803$, against
the simulated $2.3213$, $0.9777$, $1.0350$ and $0.4837$.

\subsection{Checking the exact law}\label{sse:exact}

\begin{table}[htbp]
\caption{Exactness of the finite-sample theory: empirical coverage of the nominal $95\%$
intervals \eqref{eq:ci-theta} and \eqref{eq:ci-beta}, Kolmogorov--Smirnov $p$-values of
the two pivotal statistics against their exact reference laws, and empirical correlation
$\widehat\rho$ between $\widehat\mu$ and $\widehat\beta^{\,2}$. Scenario 2, $m=3$,
$\tH=(0.65,0.75,0.85)$, $20\,000$ replications; the Monte Carlo standard error of a
coverage entry is $0.0015$.}
\label{ta:coverage}
\small
\begin{tabular}{@{}llccccr@{}}
\toprule
 & & \multicolumn{2}{c}{Coverage of the $95\%$ interval}
   & \multicolumn{2}{c}{KS $p$-value} & \\
\cmidrule(lr){3-4}\cmidrule(lr){5-6}
$h$ & $N$ & for $\mu$ & for $\beta^{2}$ & $t_{N-1}$ & $\chi^{2}_{N-1}$
    & $\widehat\rho$\\
\midrule
$1/252$ & 30  & 0.9477 & 0.9495 & 0.302 & 0.513 & $-$0.0025\\
        & 100 & 0.9480 & 0.9494 & 0.851 & 0.752 & \phantom{$-$}0.0028\\
        & 200 & 0.9480 & 0.9504 & 0.073 & 0.721 & $-$0.0044\\
        & 300 & 0.9468 & 0.9481 & 0.975 & 0.089 & $-$0.0002\\
        & 500 & 0.9465 & 0.9486 & 0.705 & 0.272 & \phantom{$-$}0.0041\\
\addlinespace
$1/12$  & 30  & 0.9482 & 0.9490 & 0.921 & 0.737 & \phantom{$-$}0.0044\\
        & 100 & 0.9497 & 0.9496 & 0.046 & 0.707 & $-$0.0010\\
        & 200 & 0.9476 & 0.9505 & 0.710 & 0.186 & \phantom{$-$}0.0014\\
        & 300 & 0.9487 & 0.9494 & 0.586 & 0.532 & $-$0.0025\\
        & 500 & 0.9502 & 0.9498 & 0.703 & 0.172 & $-$0.0036\\
\bottomrule
\end{tabular}
\end{table}

Theorem \ref{th:exactlaw} and Corollary \ref{co:ci} make claims of a different nature
from those assessed above: the confidence intervals \eqref{eq:ci-beta} and
\eqref{eq:ci-theta} have coverage exactly $1-\gamma$ at every sample size, and the two
estimators are exactly independent. Table \ref{ta:coverage} reports the corresponding
check in Scenario 2. By Remark \ref{re:pivot} the coverages do not depend on $\mu$,
$\beta$, $h$ or $\tH$, so a single scenario suffices; the two meshes are retained as a
control.

Every entry of the two coverage columns lies within three Monte Carlo standard errors of
the nominal $0.95$, with no drift in $N$ and in particular no deterioration at $N=30$.
The Kolmogorov--Smirnov tests of the two pivotal statistics against their exact reference
laws, $t_{N-1}$ for $\sqrt{\kappa_N}(\widehat\mu-\mu)/\widetilde\beta$ and
$\chi^{2}_{N-1}$ for $N\widehat\beta^{\,2}/\beta^{2}$, give no evidence against the
theory: of the twenty $p$-values reported, one falls below $0.05$, which is what one
expects by chance. The empirical correlation between $\widehat\mu$ and
$\widehat\beta^{\,2}$ never exceeds $0.0044$ in absolute value, against a Monte Carlo
standard error of $1/\sqrt{20\,000}=0.0071$, confirming Theorem \ref{th:exactlaw}(iii).

\subsection{Sensitivity to the Hurst vector}\label{sse:misspec}

\begin{table}[htbp]
\caption{Sensitivity to a misspecified Hurst vector. Data are generated with
$\tH=(0.65,0.75,0.85)$; the estimators and the nominal $95\%$ intervals are computed with
$\tH+\delta\bone$. Scenario 2, $h=1/252$, $20\,000$ replications; the Monte Carlo standard
error of a coverage entry is at most $0.0030$. The column ``approx.'' is the right-hand
side of \eqref{eq:covapprox}. The last column is simulated, not exact; at $\delta=0$ it
reads $0.9897$ and $0.9978$ against the exact $\{Nb_N(\delta)-q_N(\delta)\}/N=0.9900$ and
$0.9980$, and at $\delta=-0.02$ it reads $0.7707$ and $0.7781$ against the exact $0.7710$
and $0.7783$, the differences being of the expected Monte Carlo size.}
\label{ta:misspec}
\small
\begin{tabular}{@{}rrcccc@{}}
\toprule
 & & \multicolumn{2}{c}{Coverage for $\mu$} & Coverage & \\
\cmidrule(lr){3-4}
$N$ & $\delta$ & simulated & approx. & for $\beta^{2}$
    & $\E[\widehat\beta^{\,2}]/\beta^{2}$\\
\midrule
100 & $-0.10$ & 0.8334 & 0.8340 & 0.0000 & 0.2989\\
    & $-0.05$ & 0.9010 & 0.9019 & 0.0080 & 0.5352\\
    & $-0.02$ & 0.9326 & 0.9334 & 0.6191 & 0.7707\\
    & $\phantom{-}0.00$ & 0.9488 & 0.9500 & 0.9514 & 0.9897\\
    & $+0.02$ & 0.9617 & 0.9633 & 0.5304 & 1.2785\\
    & $+0.05$ & 0.9760 & 0.9776 & 0.0053 & 1.9009\\
    & $+0.10$ & 0.9900 & 0.9902 & 0.0000 & 3.8317\\
\addlinespace
500 & $-0.10$ & 0.7704 & 0.7682 & 0.0000 & 0.3046\\
    & $-0.05$ & 0.8778 & 0.8761 & 0.0000 & 0.5418\\
    & $-0.02$ & 0.9260 & 0.9254 & 0.0243 & 0.7781\\
    & $\phantom{-}0.00$ & 0.9482 & 0.9500 & 0.9492 & 0.9978\\
    & $+0.02$ & 0.9663 & 0.9682 & 0.0201 & 1.2875\\
    & $+0.05$ & 0.9844 & 0.9852 & 0.0000 & 1.9115\\
    & $+0.10$ & 0.9962 & 0.9964 & 0.0000 & 3.8472\\
\bottomrule
\end{tabular}
\end{table}

Every exactness statement above is conditional on $\Sig$ being the true covariance, that
is, on $\tH$ being known. We therefore generate data from $\tH=(0.65,0.75,0.85)$ and
compute the estimators and intervals from $\tH+\delta\bone$, with $\delta$ ranging over
$\pm0.02$, $\pm0.05$ and $\pm0.10$; Scenario 2, $h=1/252$, $N\in\{100,500\}$.
Table \ref{ta:misspec} reports the outcome.

The effect is carried by two scalars. Writing $\Sigma_{\!a}$ for the assumed covariance
and $\Sig$ for the true one, set
\begin{equation}\label{eq:bq}
  b_N(\delta)=\frac1N\tr\bigl(\Sigma_{\!a}^{-1}\Sig\bigr),\qquad
  q_N(\delta)=\frac{\btau^{\top}\Sigma_{\!a}^{-1}\Sig\Sigma_{\!a}^{-1}\btau}
                   {\btau^{\top}\Sigma_{\!a}^{-1}\btau},
\end{equation}
both equal to one at $\delta=0$. A direct computation gives
$\E[\widehat\beta^{\,2}]=\beta^{2}\{Nb_N(\delta)-q_N(\delta)\}/N$, and replacing the
quadratic form in the denominator of the studentised statistic by its mean yields
\begin{equation}\label{eq:covapprox}
  \text{coverage of \eqref{eq:ci-theta}}\ \approx\
  2F_{N-1}\Bigl(t_{N-1,\,1-\gamma/2}\sqrt{b_N(\delta)/q_N(\delta)}\Bigr)-1,
\end{equation}
$F_{N-1}$ being the $t_{N-1}$ distribution function. Crude as it is,
\eqref{eq:covapprox} reproduces every entry of the simulated coverage column of
Table \ref{ta:misspec} to within $1.6$ Monte Carlo standard errors.

The two parameters behave in opposite ways. The factor $b_N(\delta)$ is essentially free
of $N$, being $0.7799$ at $N=100$ and $0.7802$ at $N=500$ for $\delta=-0.02$, while the
interval \eqref{eq:ci-beta} has width of order $N^{-1/2}$; a bias that does not vanish is
eventually excluded and the coverage collapses, from $0.62$ at $N=100$ to $0.02$ at
$N=500$ already at $\delta=-0.02$. The reason is identifiability rather than numerical
instability: by \eqref{eq:sfbmcov} every entry of $\Sigma_{\tH_r}$ is homogeneous of
degree $2\tH_r$ in the time points, so shifting every $\tH_r$ by $\delta$ multiplies the
covariance by approximately $h^{2\delta}$ on a grid of mesh $h$, and a perturbation of
the Hurst vector is almost indistinguishable from a rescaling of $\beta^{2}$. The
estimator $\widehat\beta^{\,2}$ should be read as an estimate of
$\beta^{2}b_N(\delta)$, that is, of the scale relative to the postulated $\tH$.

For the drift the picture is better. The degradation is gradual and signed: the interval
is too short when $b_N(\delta)<1$ and too long when $b_N(\delta)>1$, so on a fine grid
understating the Hurst parameters shortens it and overstating them lengthens it. At
$\delta=-0.02$ the coverage of the nominal $95\%$ interval is $0.933$ at $N=100$ and
$0.926$ at $N=500$; at $\delta=+0.02$ it is $0.962$ and $0.966$. A practical rule
follows: when $\tH$ is uncertain, erring towards a larger $b_N(\delta)$, which for $h<1$
means rounding the Hurst estimates upwards, yields a conservative interval for $\mu$.
\subsection{Graphical analysis}

\begin{figure}[htbp]
\centering
\includegraphics[width=\textwidth]{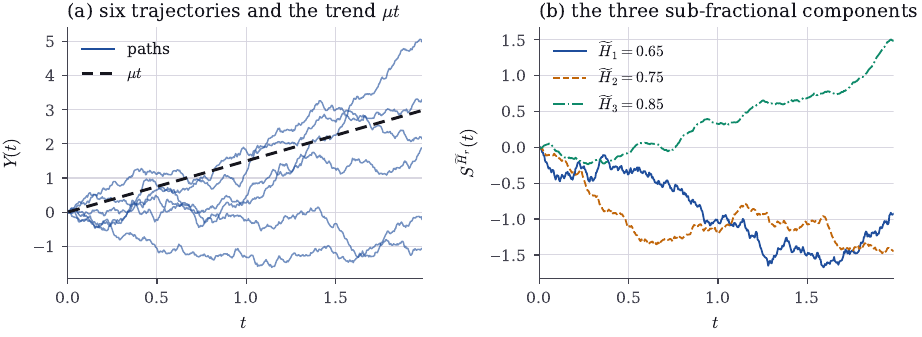}
\caption{The model. (a) Six independent trajectories of \eqref{eq:model} in Scenario 2
with $N=500$, $h=1/252$, and the trend $\mu t$ (dashed). (b) The three sub-fractional
components of one trajectory.}
\label{fi:paths}
\end{figure}

\begin{figure}[htbp]
\centering
\includegraphics[width=\textwidth]{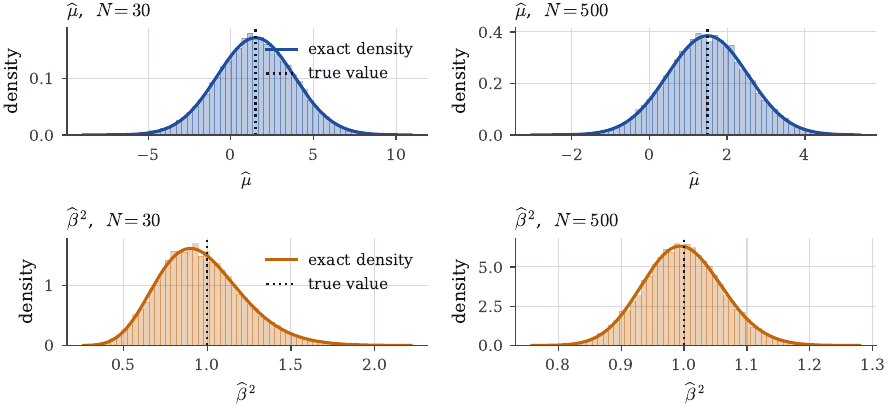}
\caption{Histograms of $\widehat\mu$ (top) and $\widehat\beta^{\,2}$ (bottom) over
$20\,000$ replications of Scenario 2 with $h=1/252$, at $N=30$ (left) and $N=500$
(right), with the exact densities of Theorem \ref{th:exactlaw} superimposed. Dotted
vertical lines mark the true parameter values.}
\label{fi:hist}
\end{figure}

\begin{figure}[htbp]
\centering
\includegraphics[width=\textwidth]{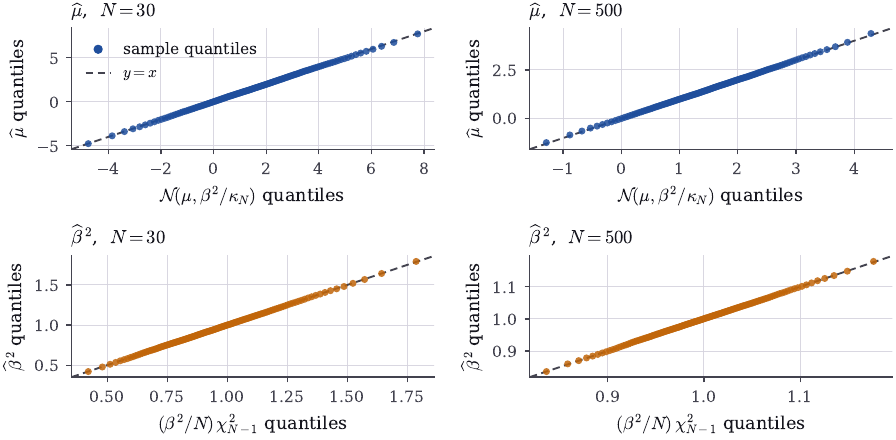}
\caption{Quantile--quantile plots against the exact reference laws of
Theorem \ref{th:exactlaw}, for the same replications as in Figure \ref{fi:hist}.}
\label{fi:qq}
\end{figure}

Figure \ref{fi:paths} illustrates the model. Panel (a) shows six independent trajectories
of \eqref{eq:model} in Scenario 2 with $N=500$ and $h=1/252$, together with the trend
$\mu t$; panel (b) displays the three sub-fractional components of one trajectory and
shows how the roughness decreases with $\tH_r$.

Figures \ref{fi:hist} and \ref{fi:qq} assess Theorem \ref{th:exactlaw} graphically. In
Figure \ref{fi:hist} the histograms of $\widehat\mu$ and $\widehat\beta^{\,2}$ are
superimposed not on a fitted Gaussian density but on the \emph{exact} densities predicted
by the theorem, namely $\Norm(\mu,\beta^{2}/\kappa_N)$ and the density of
$(\beta^{2}/N)\chi^{2}_{N-1}$. The agreement is complete at $N=500$ and equally complete
at $N=30$, where the pronounced right skewness of $\widehat\beta^{\,2}$, which would
appear as a failure of normality, is precisely the skewness of $\chi^{2}_{29}$.
Figure \ref{fi:qq} makes the same point through quantile--quantile plots against the
exact reference laws: the four plots are straight lines at $N=30$ as much as at $N=500$.

\clearpage

\section{Discussion and concluding remarks}\label{se:disc}

For a linear drift observed together with a superposition of independent sub-fractional
Brownian motions sharing a common scale, likelihood inference is exact. The maximum
likelihood estimators of the drift and of the scale are available in closed form; their
joint law is known at every sample size, with $\widehat\mu$ Gaussian,
$N\widehat\beta^{\,2}/\beta^{2}$ chi-square with $N-1$ degrees of freedom and the two
independent; and confidence intervals and tests of exact level follow, together with
complete sufficiency, minimum variance unbiasedness and attainment of the Cram\'er--Rao
bound. The asymptotic theory, mean square and strong consistency and asymptotic
normality, is then a consequence rather than a substitute. This is the practical
difference from the existing treatment of the one-component model \cite{KL15}, where
Berry--Ess\'een bounds are required precisely because the finite-sample law is not
available. The simulations of Section \ref{se:sim} reproduce these statements to Monte
Carlo accuracy, at $N=30$ as much as at $N=500$.

Several features of the analysis deserve emphasis. First, the model extends the
sub-fractional framework to an arbitrary finite number of independent components, which
enlarges the range of accessible covariance structures while preserving Gaussianity and
hence tractability of the likelihood; the process itself is the one introduced in
\cite{SSZ26}, and the single-component model of \cite{KL15} is the case $m=1$.

Second, the entire structure is compressed into the single scalar
$\kappa_N=\btau^{\top}\Sig^{-1}\btau$. Once the noise covariance is known up to a positive factor,
whitening reduces the model to a Gaussian linear model with one regressor
(Remark \ref{re:glm}), and the number of components, the Hurst vector and the sampling
step are all absorbed into $\kappa_N$. A practitioner who can form $\Sig$ and compute
$\kappa_N$ can therefore carry out exact inference with no further analysis of the model.
The same observation delimits the theory: with
component-specific scales the covariance is no longer a known matrix times an unknown
constant, and exactness is lost.

Third, the absence of stationary increments is what separates this model from its
fractional counterpart at the technical level. It removes the Toeplitz reduction of
$\kappa_N$, the spectral density and with it the classical route to a matching lower bound on
$\Var(\widehat\mu)$, and it makes even the nonsingularity of $\Sig$ a question about
local nondeterminism rather than an elementary computation. The representation of
sub-fractional Brownian motion as the even part of a two-sided fractional Brownian
motion, \eqref{eq:evenpart}, is what restores tractability at that point, and it is worth
noting that the same representation is the natural tool for several of the open questions
below.

Fourth, the two parameters behave very differently. Inference about the scale is free of
the model: after whitening, the sampling distribution of $\widehat\beta^{\,2}$ depends on
$N$ alone, so the chi-square interval has exact level whatever the dependence structure.
Inference about the drift is not free; by \eqref{eq:varbound} what can be learned about
$\mu$ is decided by the length of the observation window and by the strongest
long-range dependence present, and refining the mesh on a fixed window does not help.
Section \ref{sse:misspec} shows that the two also inherit the conditioning on $\tH$ very
differently: the Student interval for $\mu$ tolerates a small error in the Hurst
vector and loses level in a direction one can predict, whereas the chi-square interval
for $\beta^{2}$ does not, since $\beta^{2}$ and $\tH$ are nearly confounded.

Several directions remain open. The most pressing is a matching lower bound for
$\Var(\widehat\mu^{(N)})$, which would turn the order in \eqref{eq:varbound} into an
exact rate; Remark \ref{re:nosharp} explains why the stationary-increment argument is
unavailable and why \eqref{eq:evenpart} is the natural substitute. A second is to relax
the common-scale restriction, where Remark \ref{re:glm} indicates what is at stake.
The equidistant sampling assumption, by contrast, enters only through the arithmetic at
the end of Proposition \ref{pr:varbound}; the exact theory of Section \ref{se:mle} holds
for an arbitrary design.

Beyond these, three questions seem to us the most interesting.

\begin{enumerate}[leftmargin=2.0em,itemsep=1.1ex,topsep=1.0ex]

\item \emph{Subordination.} Replace physical time by an independent subordinator
$\{G_t,\,t\ge0\}$, a Gamma or an inverse $\alpha$-stable process, and consider
$\widetilde Y(t)=Y(G_t)$. The correlation structure of such time-changed mixed models is
known \cite{AlM20,Mli23a,Mli23b}; the inferential question is not. The time-changed
process is no longer Gaussian, the likelihood is not available in closed form, and none
of the arguments of Section \ref{se:mle} survives. The Brownian case is encouraging: the
conditional maximum likelihood estimator of the drift is $\widetilde Y(t_N)/G_{t_N}$,
and for a Gamma subordinator with variance rate $\nu$ one obtains the exact law
$\sqrt{t_N}\,(\widehat\mu_N-\mu)\sim t_{2t_N/\nu}$. Does an exact law of this kind
persist for $m$ sub-fractional components?

\item \emph{Higher-order sub-fractional components.} The $n$th-order fractional Brownian
motion admits a Hurst index $H\in(n-1,n)$ and so escapes the ceiling $H<1$; its
statistical theory is developed in Chaouch, El Maroufy and El Omari \cite{CEE23} and El
Omari \cite{ElO23}, and is entirely asymptotic, resting on power variations or on the
ergodic theorem. An $n$th-order sub-fractional analogue does not appear to have been
constructed. If it is built through the even-part representation
\eqref{eq:evenpart}, from a two-sided $n$th-order fractional Brownian motion, two
questions arise: does the nondegeneracy of Lemma \ref{le:nondeg} survive, the strong
local nondeterminism invoked in Remark \ref{re:slnd} being unavailable at that order;
and, if it does,
does the exact theory of Section \ref{se:mle} go through verbatim? The second question
may well have an affirmative answer, since Section \ref{se:mle} uses nothing about
$\Sig$ beyond $\mathrm{Cov}(\bY)=\beta^{2}\Sig$ with $\Sig$ known and positive definite.

\item \emph{An unknown Hurst vector.} The theory above takes $\tH$ as known. For the
mixed fractional models, that is for a Brownian motion superposed with one or more
fractional Brownian motions, \cite{MP23,CDM25,CDL25} estimate the Hurst index at high
frequency and establish the optimal rates; no such theory exists for the
multi-sub-fractional model, whose increments are not stationary. Our aim is to estimate
$\tH$ there, jointly with $\mu$ and $\beta^{2}$ or in a first stage, and to quantify the
resulting loss. Section \ref{sse:misspec} indicates that it will be asymmetric: the
interval for $\mu$ degrades slowly, the one for $\beta^{2}$ does not, $\beta^{2}$ and
$\tH$ being nearly confounded on a fine grid.

\end{enumerate}

Taken together, these observations describe a model in which exactness survives the loss
of stationary increments, at the price of a known Hurst vector and a scale common to the
components. The price is real, and Section \ref{sse:misspec} shows that it is paid almost
entirely by $\beta^{2}$, which is nearly confounded with $\tH$, while the interval for
$\mu$ degrades slowly and in a direction one can predict. Neither restriction, however,
is needed for the whitening argument itself; relaxing them is a question about the
covariance structure rather than about the method of proof, and that is what makes the
directions above worth pursuing.


\begin{thebibliography}{99}

\bibitem{AM21}
S.~Alajmi and E.~Mliki,
\emph{Mixed generalized fractional Brownian motion},
J.\ Stoch.\ Anal.\ \textbf{2} (2021), no.~2, Article~2.

\bibitem{AlM20}
S.~Alajmi and E.~Mliki,
\emph{On the mixed fractional Brownian motion time changed by inverse
$\alpha$-stable subordinator},
Appl.\ Math.\ Sci.\ \textbf{14} (2020), no.~16, 755--763.

\bibitem{BGT04}
T.~Bojdecki, L.~G. Gorostiza, and A.~Talarczyk,
\emph{Sub-fractional Brownian motion and its relation to occupation times},
Statist.\ Probab.\ Lett.\ \textbf{69} (2004), no.~4, 405--419.

\bibitem{CDL25}
C.~H. Chong, T.~Delerue, and G.~Li,
\emph{When frictions are fractional: rough noise in high-frequency data},
J.\ Amer.\ Statist.\ Assoc.\ \textbf{120} (2025), no.~551, 1531--1544.

\bibitem{CDM25}
C.~H. Chong, T.~Delerue, and F.~Mies,
\emph{Rate-optimal estimation of mixed semimartingales},
Ann.\ Statist.\ \textbf{53} (2025), no.~1.

\bibitem{CEE23}
H.~Chaouch, H.~El~Maroufy, and M.~El~Omari,
\emph{Statistical inference for models driven by $n$-th order fractional Brownian
motion},
Theory Probab.\ Math.\ Statist.\ \textbf{108} (2023), 29--43.

\bibitem{Dah89}
R.~Dahlhaus,
\emph{Efficient parameter estimation for self-similar processes},
Ann.\ Statist.\ \textbf{17} (1989), no.~4, 1749--1766;
correction, \emph{ibid.}\ \textbf{34} (2006), no.~2, 1045--1047.

\bibitem{ElO23}
M.~El~Omari,
\emph{Parameter estimation for $n$th-order mixed fractional Brownian motion with
polynomial drift},
J.\ Korean Statist.\ Soc.\ \textbf{52} (2023), 450--461.

\bibitem{HNXZ11}
Y.~Hu, D.~Nualart, W.~Xiao, and W.~Zhang,
\emph{Exact maximum likelihood estimator for drift fractional Brownian motion at discrete
observation},
Acta Math.\ Sci.\ Ser.\ B \textbf{31} (2011), no.~5, 1851--1859.

\bibitem{Jan97}
S.~Janson,
\emph{Gaussian Hilbert Spaces},
Cambridge Tracts in Mathematics, vol.~129, Cambridge University Press, Cambridge, 1997.

\bibitem{KL15}
N.~Kuang and B.~Liu,
\emph{Parameter estimations for the sub-fractional Brownian motion with drift at discrete
observation},
Braz.\ J.\ Probab.\ Stat.\ \textbf{29} (2015), no.~4, 778--789.

\bibitem{LC98}
E.~L. Lehmann and G.~Casella,
\emph{Theory of Point Estimation}, 2nd ed.,
Springer Texts in Statistics, Springer, New York, 1998.

\bibitem{LR05}
E.~L. Lehmann and J.~P. Romano,
\emph{Testing Statistical Hypotheses}, 3rd ed.,
Springer Texts in Statistics, Springer, New York, 2005.

\bibitem{MS23}
H.~Maleki Almani and T.~Sottinen,
\emph{Multi-mixed fractional Brownian motions and Ornstein--Uhlenbeck processes},
Mod.\ Stoch.\ Theory Appl.\ \textbf{10} (2023), no.~4, 343--366.

\bibitem{Men10}
I.~Mendy,
\emph{On the local time of sub-fractional Brownian motion},
Ann.\ Math.\ Blaise Pascal \textbf{17} (2010), no.~2, 357--374.

\bibitem{MRS17}
Y.~Mishura, K.~Ralchenko, and S.~Shklyar,
\emph{Maximum likelihood drift estimation for Gaussian process with stationary
increments},
Austrian J.\ Stat.\ \textbf{46} (2017), no.~3--4, 67--78.

\bibitem{MV15}
Y.~Mishura and G.~Voronov,
\emph{Construction of maximum likelihood estimator in the mixed fractional--fractional
Brownian motion model with double long-range dependence},
Mod.\ Stoch.\ Theory Appl.\ \textbf{2} (2015), no.~2, 147--164.

\bibitem{Mli23a}
E.~Mliki,
\emph{On the fractional mixed fractional Brownian motion time changed by inverse
$\alpha$-stable subordinator},
Glob.\ Stoch.\ Anal.\ \textbf{10} (2023), no.~1, 1--8.

\bibitem{Mli23b}
E.~Mliki,
\emph{Correlation structure of time-changed generalized mixed fractional Brownian
motion},
Fractal Fract.\ \textbf{7} (2023), no.~8, 591.

\bibitem{MP23}
F.~Mies and M.~Podolskij,
\emph{Estimation of mixed fractional stable processes using high-frequency data},
Ann.\ Statist.\ \textbf{51} (2023), no.~5.

\bibitem{NOL08}
D.~Nualart and S.~Ortiz-Latorre,
\emph{Central limit theorems for multiple stochastic integrals and Malliavin calculus},
Stochastic Process.\ Appl.\ \textbf{118} (2008), no.~4, 614--628.

\bibitem{Pit78}
L.~D. Pitt,
\emph{Local times for Gaussian vector fields},
Indiana Univ.\ Math.\ J.\ \textbf{27} (1978), no.~2, 309--330.

\bibitem{RY23}
K.~Ralchenko and M.~Yakovliev,
\emph{Asymptotic normality of parameter estimators for mixed fractional Brownian motion
with trend},
Austrian J.\ Stat.\ \textbf{52} (2023), 127--148.

\bibitem{RY24}
K.~Ralchenko and M.~Yakovliev,
\emph{Parameter estimation for fractional mixed fractional Brownian motion based on
discrete observations},
Mod.\ Stoch.\ Theory Appl.\ \textbf{11} (2024), no.~1, 1--29.

\bibitem{Sgh13}
A.~Sghir,
\emph{The generalized sub-fractional Brownian motion},
Commun.\ Stoch.\ Anal.\ \textbf{7} (2013), no.~3, 373--382.

\bibitem{SSZ26}
F.~Shokrollahi, T.~Sottinen, and M.~Zili,
\emph{Multi-mixed sub-fractional Brownian motion and Ornstein--Uhlenbeck processes},
AIMS Math.\ \textbf{11} (2026), no.~2, 3464--3498.

\bibitem{Tud07}
C.~Tudor,
\emph{Some properties of the sub-fractional Brownian motion},
Stochastics \textbf{79} (2007), no.~5, 431--448.

\bibitem{WCT21}
W.~Wang, G.~Cai, and X.~Tao,
\emph{Pricing geometric Asian power options in the sub-fractional Brownian motion
environment},
Chaos Solitons Fractals \textbf{145} (2021), 110754.

\bibitem{XZZ11}
W.~Xiao, W.~Zhang, and X.~Zhang,
\emph{Maximum-likelihood estimators in the mixed fractional Brownian motion},
Statistics \textbf{45} (2011), no.~1, 73--85.

\bibitem{Zil18}
M.~Zili,
\emph{On the generalized fractional Brownian motion},
Math.\ Models Comput.\ Simul.\ \textbf{10} (2018), no.~6, 759--769.

\end{thebibliography}
\end{document}